\numberwithin{equation}{section}
\newtheorem{theorem}{Theorem}
\newtheorem{lemma}{Lemma}
\newtheorem{remark}{Remark}
\newtheorem{assumption}{Assumption}
\newenvironment{proof}{\noindent {\textbf{Proof.}}}
\begin{document}
\let\WriteBookmarks\relax
\def\floatpagepagefraction{1}
\def\textpagefraction{.001}
\let\printorcid\relax

\shorttitle{Non-ergodic convergence rate of inertial accelerated  primal-dual algorithm} 

\shortauthors{X. He, N.J. Huang, Y.P. Fang}  

\title[mode = title]{Non-ergodic convergence rate of an inertial accelerated  primal-dual algorithm  for saddle point problems}  

%

%
\author[1]{Xin He}\ead{hexinuser@163.com}
\author[2]{Nan-Jing Huang}\ead{njhuang@scu.edu.cn}
\author[2]{Ya-Ping Fang}\ead{ypfang@scu.edu.cn} 
\cormark[1]
\cortext[1]{Corresponding author}

\address[1]{School of Science, Xihua University, Chengdu, Sichuan, P.R. China}
    
\address[2]{Department of Mathematics, Sichuan University, Chengdu, Sichuan, P.R. China}

\nonumnote{}

\begin{abstract}
In this paper, we design an inertial accelerated  primal-dual algorithm to address the convex-concave saddle point problem, which is formulated as $\min_{x}\max_{y} f(x) + \langle Kx, y \rangle - g(y)$. Remarkably, both functions $f$ and $g$ exhibit a composite structure, combining ``nonsmooth'' + ``smooth'' components. Under the assumption of partially strong convexity in the sense that $f$ is convex and $g$ is strongly convex,  we introduce a novel inertial accelerated primal-dual algorithm based on  Nesterov's extrapolation. This algorithm can be reduced  to two classical accelerated forward-backward methods for unconstrained optimization problem. We show that the proposed algorithm achieves a  non-ergodic $\mathcal{O}(1/k^2)$ convergence rate, where $k$ represents the number of iterations. Several numerical experiments validate the efficiency of our proposed algorithm.
\end{abstract}

\begin{keywords}
Inertial accelerated primal-dual algorithm\sep  Saddle point problem\sep Non-ergodic rate\sep  Nesterov acceleration
\end{keywords}

\maketitle

\section{Introduction}
In this paper, we consider the following min-max saddle point problem:
\begin{equation}\label{ques}
\min_{x\in\mathbb{R}^n}\max_{y\in\mathbb{R}^m}\mathcal{L}(x,y) = f(x) + \langle Kx, y \rangle - g(y)
\end{equation}
with
\[f(x)=f_1(x)+f_2(x)\quad \text{and}\quad g(y)=g_1(y)+g_2(y).\]
Here, $K\in\mathbb{R}^{m\times n}$, $f_1:\mathbb{R}^{n}\to \mathbb{R}\cup\{+\infty\}$ and $g_1:\mathbb{R}^{m}\to\mathbb{R}\cup\{+\infty\}$ are proper, closed and convex functions,  $f_2:\mathbb{R}^{n}\to \mathbb{R}$ and $g_2:\mathbb{R}^{m}\to\mathbb{R}$ are smooth and convex functions. Problem \eqref{ques} finds applications across diverse fields, such as machine learning, image processing, computer vision, and the finding  a saddle point for the Lagrangian function in convex minimization problems (see \cite{Goldstein2015, ChambolleP, LiML2019ACC, bauschke2019, bubeck}).

For solving problem \eqref{ques}, Chambolle and Pock \cite{ChambolleP} introduced the first-order primal-dual algorithm (PDA) defined by the following iteration scheme: 
\begin{numcases}{}
	x_{k+1} = {\bf Prox}_{\alpha, f}(x_k-\alpha K^T y_k),\nonumber\\
	\bar{x}_{k+1} = x_{k+1}+\theta(x_{k+1}-x_{k}),\label{pda}\\
	y_{k+1} = {\bf Prox}_{\beta, g}(y_k+\beta K \bar{x}_{k+1}).\nonumber
\end{numcases}
Here, the $\text{\bf Prox}_{\alpha, f}$ denotes the proximal operator for the function $f$, defined as:
\[\text{\bf Prox}_{\alpha,f}(x) = \mathop{\arg\min}_y f(y)+\frac{1}{2\alpha}\| y-x\|^2\]
with $\alpha>0$. This algorithm has gained significant attention due to its effectiveness in solving various imaging problems. When $\theta=0$, the PDA reduces to the classical Arrow-Hurwicz method \cite{Arrow}. Chambolle and Pock \cite{ChambolleP} demonstrated that the PDA is closely related to the extra-gradient method \cite{Korpelevich}, Douglas-Rachford splitting method \cite{Lions}, and preconditioned alternating direction method of multipliers \cite{Esser}. They also established that the PDA with $\theta \in\{0,1\}$ achieves an ergodic $\mathcal{O}(1/k)$ convergence rate when both $f$ and $g$ are convex. Additionally, for suitable choices of $\alpha$, $\beta$, and $\theta$, they proved that the PDA attains an ergodic linear convergence when $f$ and $g$ are strongly convex. In the case where $g$ is $\mu_g$-strongly convex, Chambolle and Pock \cite{ChambolleP} introduced an accelerated PDA with adaptive parameters, which enjoys an ergodic $\mathcal{O}(1/k^2)$ convergence rate. Chen et al. \cite{ChenSiam2014} proposed an accelerated primal-dual method for problem \eqref{ques}, which achieves an ergodic convergence with a rate of $\mathcal{O}\left({L_f}/{k^2}+{\|K\|}/{k}\right)$,
where $L_f$ represents a Lipschitz constant of $\nabla f$. By introducing the Bregman distance, Chambolle and Pock \cite{Chambolle2016MP} proposed a first-order primal-dual algorithm for solving problem \eqref{ques} when $f=f_1+f_2$. They established ergodic convergence rates with simpler proofs compared to \cite{ChambolleP}. He et al. \cite{He2014siamPD,HeJMIV} introduced primal-dual hybrid gradient methods, which exhibit an ergodic convergence with a rate of $\mathcal{O}(1/k)$ in the convex case. Tran-Dinh \cite{Tran2022} have designed  a unified convergence analysis framework for  the accelerated smoothed gap reduction algorithm proposed in \cite{Tran2018}.  They demonstrate that the proposed algorithms enjoy a non-ergodic $\mathcal{O}(1/k^2)$ convergence rate in the partially strongly convex case. Zhu et al. \cite{Zhu2022} have introduced  novel primal-dual algorithms aimed at solving a class of nonsmooth and nonlinear convex-concave minimax problems, and the algorithms demonstrate both ergodic and non-ergodic $\mathcal{O}(1/k^2)$ convergence rates in the partially strongly convex assumption. In the realm of inexact first-order primal-dual algorithms for solving problem \eqref{ques}, various approaches have been explored, as documented in \cite{RaschCH2020, Jiang2021NA, Jiang2021}. These inexact methods achieve ergodic convergence rates of $\mathcal{O}(1/k)$ in the convex case, $\mathcal{O}(1/k^2)$ in the case of partially strong convexity (when either $f$ or $g$ is strongly convex). Furthermore, numerous variants of the primal-dual algorithm have emerged, such as adaptive primal-dual splitting methods in \cite{Goldstein2015}, randomized coordinate-descent methods in \cite{Fercoq}, and primal-dual methods with linesearch in \cite{MalitskySIAM}. For additional variations and developments in primal-dual methods, we refer the reader to \cite{Bai2023,bai2020several,HeAMO2024,chang2021,hamedani2021,Mokhtari2020,Tan2020,Boct2023COA}.

As mentioned above, various ergodic convergence rate results have been  established for  primal-dual algorithms for problem \eqref{ques} in the literature. Only \cite{Tran2022,Zhu2022} investigated the non-ergodic convergence of primal-dual algorithms, but they did not consider the problem \eqref{ques} with $f$ and $g$ having a composite structure. It is worth mentioning that multiple adaptive parameters of the algorithm considered in \cite{Tran2022,Zhu2022}  need to be set, which introduce challenges in parameter selection during actual numerical calculations. For the primal-dual algorithm in \cite{Tran2022}, a fixed proximal center $\dot{y}$ is required in each iteration, which makes it impossible to make more efficient use of the information in the iteration sequence.  In recent years, numerous researchers have devoted their efforts to studying the Augmented Lagrangian Method (ALM) for solving:
\begin{equation}\label{ques_sepA}
				\min_{x\in  \mathbb{R}^{m}} \quad f(x)\qquad s.t.  \  Ax=b,
	\end{equation}
and the Alternating Direction Method of Multipliers (ADMM) for solving:
	\begin{equation}\label{ques_sep}
				\min_{x\in  \mathbb{R}^{m},y\in  \mathbb{R}^{n}} \quad f(x)+g(y) \qquad s.t.  \  Ax+By=b.
	\end{equation}
Significant efforts have been made to explore the non-ergodic convergence rates of various variants of ALM and ADMM. In the convex case, non-ergodic $\mathcal{O}(1/k)$ convergence rates for ALMs and ADMMs have been investigated in \cite{HeY2015NM, Li2019}. Some accelerated ALMs (in the convex case) and accelerated ADMMs (in the partially strongly convex case), incorporateed with  Nesterov's extrapolation technique from \cite{Nesterov1983, Nesterov2018, Tseng2008, BeckIma},  have been proposed in \cite{HeNA, Xu2017, BotMP2022, Tran2018, HeAuto, Tran2020, LuoSco,Luo2021Accer}. These methods achieve non-ergodic $\mathcal{O}(1/k^2)$ convergence rates for both  objective residual and  feasibility violation. It is well-known that problems \eqref{ques_sepA} and \eqref{ques_sep} can be equally reformulated in the form of problem \eqref{ques}.

In this paper, we  aim to   apply  acceleration techniques  inspired by ALMs and ADMMs, known for the non-ergodic convergence rates, to develop a novel primal-dual algorithm with the non-ergodic convergence for addressing problem \eqref{ques}.  By incorporating classical inertial coefficients and introducing additional simple constant parameters, we propose the following inertial accelerated  primal-dual algorithm (Algorithm \ref{al1}) for problem \eqref{ques}, where $\mu_g>0$ represents the strongly convex coefficient of $g$.

  \begin{algorithm}
       \caption{Inertial accelerated  primal-dual algorithm for problem \eqref{ques}: Partially strongly convex case}
        \label{al1}
        {\bf Initialization:} Choose $u_1= x_1 =x_0\in dom(f),\ v_1=v_0=y_1 =y_0\in dom(g)$. Pick $\alpha,\beta>0,  t_1\geq 1$. \\
        \For{$k = 1, 2,\cdots$}{
        Set $t_{k+1} = \min\left\{\frac{1+\sqrt{1+4t_k^2}}{2}, \sqrt{t_k^2+\mu_g\beta t_k}\right\}$. \\
        Compute
        \begin{eqnarray*}
        	&&(\bar{x}_k,\bar{y}_k) = (x_k,y_k) +\frac{t_k-1}{t_{k+1}}[(x_{k},y_k)-(x_{k-1},y_{k-1})].\label{eq_al1_1} 
        \end{eqnarray*}\\
        Update $(x_{k+1},u_{k+1})$ using one of the following two options:\\
        \qquad {\bf Option 1:}
          \begin{eqnarray*}
        	&&x_{k+1} = {\bf Prox}_{\alpha, f_1}\left(\bar{x}_k-\alpha \left(\nabla f_2(\bar{x}_k)+K^T\left(v_k+\frac{t_k}{t_{k+1}}(v_k-v_{k-1})\right)\right)\right)	.\label{eq_al1_3}\\
       		&& u_{k+1} = x_{k+1}+(t_{k+1}-1)(x_{k+1}-x_k).\label{eq_al1_4}
        \end{eqnarray*}\\
        \qquad {\bf Option 2:}
         \begin{eqnarray*}
         && u_{k+1} = {\bf Prox}_{\alpha t_{k+1}, f_1}\left(u_k-\alpha t_{k+1}\left(\nabla f_2(\bar{x}_k)+K^T\left(v_k+\frac{t_k}{t_{k+1}}(v_k-v_{k-1})\right)\right)\right).\label{eq_al2_3}\\
      	&&	x_{k+1} = \frac{t_{k+1}-1}{t_{k+1}}x_{k} + \frac{1}{t_{k+1}}u_{k+1}\label{eq_al2_5}.
         \end{eqnarray*}  		 
         Update $(y_{k+1},v_{k+1})$ by 
           \begin{eqnarray*}
       		&& v_{k+1} = {\bf Prox}_{\frac{\beta}{t_{k+1}}, g_1}\left(v_k-\frac{\beta}{t_{k+1}}(\nabla g_2(\bar{y}_k)-K u_{k+1})\right).\label{eq_al1_5}\\	
      		&& y_{k+1} = \frac{t_{k+1}-1}{t_{k+1}}y_k + \frac{1}{t_{k+1}}v_{k+1}.\label{eq_al1_6}
      \end{eqnarray*}
\If{A stopping condition is satisfied}{Return $(x_{k+1},y_{k+1})$.}
}
\end{algorithm}

It's noteworthy that when $K=0$, the subproblem updates for functions $f$ and $g$ become independent. In this scenario, the update of $x_{k+1}$  in {\bf Option 1} transforms into the following accelerated forward-backward algorithm:
\begin{equation}\label{FISTA}
	\begin{cases}
		 \bar{x}_{k} = x_{k}+\frac{t_k-1}{t_{k+1}}(x_{k}-x_{k-1}),\\
		x_{k+1} ={\bf Prox}_{\alpha, f_1}(\bar{x}_k-\alpha \nabla f_2(\bar{x}_k)),
	\end{cases}
\end{equation}
where $t_{k+1} = (1+\sqrt{1+4t_k^2})/2$. This algorithm, proposed by Beck and Teboulle \cite{BeckIma}, is also known as the fast iterative shrinkage-thresholding algorithm (FISTA).  Similarly,  The update of $x_{k+1}$ and $u_{k+1}$ in  {\bf Option 2} transforms into the accelerated forward-backward algorithm with Tseng's scheme \cite{Tseng2008}:
\begin{equation}\label{NestSS}
	\begin{cases}
		 \bar{x}_{k} = x_{k}+\frac{t_k-1}{t_{k+1}}(x_{k}-x_{k-1}),\\
		u_{k+1} ={\bf Prox}_{\alpha t_{k+1}, f_1}(u_k-\alpha t_{k+1}\nabla f_2(\bar{x}_k)),\\
		x_{k+1} = \frac{t_{k+1}-1}{t_{k+1}}x_k+	\frac{1}{t_{k+1}}u_{k+1}	.
	\end{cases}.
\end{equation}
These two algorithms are designed to solve the composite problem $\min_x f_1(x)+f_2(x)$. 

To establish the non-ergodic convergence rate of Algorithm \ref{al1}, we introduce the following assumption for problem \eqref{ques}. Under this assumption, we will demonstrate the non-ergodic $\mathcal{O}(1/k^2)$ convergence rate of Algorithm \ref{al1}, and provide numerical examples to validate our theoretical findings.

\begin{assumption}\label{assA}
	 Suppose that $f(x)=f_1(x)+f_2(x)$ and $g(y)=g_1(y)+g_2(y)$, where $f_1$  is a proper, closed and convex function; $g_1$ is a proper, closed and $\mu_g$-strongly convex function with $\mu_g> 0$,  $f_2$ is a convex function and  has an $L_{f_2}$-Lipschitz continuous gradient, $g_2$ is a  convex function and  has an $L_{g_2}$-Lipschitz continuous gradient; the saddle point set $\Omega$ is nonempty.
\end{assumption}
Then, for any $y_1,y_2 \in\mathbb{R}^m$, and  $\widetilde{\nabla} g_1(y_1)\in\partial g_1(y_1)$, we have
\begin{eqnarray*}\label{strongg}
	g_1(y_2) - g_1(y_1)-\langle  \widetilde{\nabla} g_1(y_1), y_2-y_1\rangle\geq \frac{\mu_g}{2}\|y_2-y_1\|^2. 
\end{eqnarray*}
Note that: The smooth terms $f_2$ and $g_2$ can vanish in problem \eqref{ques}. When $f_2(x)\equiv 0$,  it is a convex function and $\nabla f_2(x)$ is $L_{f_2}$-Lipschitz continuous for any $L_{f_2}>0$. Similarly, When $g_2(y)\equiv 0$, it is a convex function and $\nabla g_2(y)$ is $L_{g_2}$-Lipschitz continuous for any $L_{g_2}>0$.

The structure of this paper is organized as follows: In Section 2, we introduce fundamental concepts and summarize basic lemmas for further analysis. Section 3 is dedicated to the study of convergence rate of Algorithm \ref{al1}. Section 4 provides  numerical examples to verify our theoretical results. Finally, we offer concluding remarks in Section 5.

\section{Preliminaries}
In this section, we will introduce some basic notations and preliminary lemmas.

Let $\langle \cdot, \cdot \rangle$ and $\|\cdot\|$ represent the inner product and the Euclidean norm, respectively. For a  function  $f:\mathbb{R}^{n}\to\mathbb{R}\cup\{+\infty\}$, the domain of $f$ is defined as $dom(f)=\{x\in\mathbb{R}^n | f(x)<+\infty\}$. We say that  $f$ is proper if $dom(f) \neq \emptyset$, and that  $f$ is closed if $f(x) \leq \liminf_{y\to x}f(y)$ and $dom(f)$ is closed. For a proper, closed and convex function $f:\mathbb{R}^{n}\to\mathbb{R}\cup\{+\infty\}$, the domain of $f$ is a closed and convex set. The subdifferential of $f$ at $x$ is defined as:
 \[\partial f(x) = \{\omega \in\mathbb{R}^n | f(y)\geq f(x)+\langle \omega,y-x \rangle,\quad \forall y\in\mathbb{R}^n\},\]
and we denote  $\widetilde{\nabla} f(x)\in\partial f(x)$ to be a subgradient of $f$ at $x$. 
 
 We denote the saddle point set of problem \eqref{ques} as $\Omega$. For any $(x^*,y^*)\in\Omega$, we have
\begin{equation}\label{saddle}
	 \mathcal{L}(x^*,y)\leq \mathcal{L}(x^*,y^*)\leq \mathcal{L}(x,y^*).
\end{equation}
This implies 
\begin{equation}\label{saddle_AA}
	-K^T y^*\in \partial f(x^*) ,\quad Kx^*\in \partial g(y^*).
\end{equation}

Recall the following partial primal-dual gap (introduced in \cite{ChambolleP}):
\[ 
	\mathcal{G}_{\mathcal{B}_1\times \mathcal{B}_2}(x,y) = \max_{\bar{y}\in \mathcal{B}_2} \mathcal{L}(x,\bar{y})-  \min_{\bar{x}\in \mathcal{B}_1} \mathcal{L}(\bar{x},y),
\]
where $\mathcal{B}_1\times \mathcal{B}_2$ is a compact subset of $\mathbb{R}^n\times\mathbb{R}^m$  which contains a saddle point of  problem \eqref{ques}. 
It follows from \eqref{saddle} that $	\mathcal{G}_{\mathcal{B}_1\times \mathcal{B}_2}(x,y) \geq \mathcal{L}(x,y^*) -\mathcal{L}(x^*,y) \geq 0$ with $(x^*,y^*)\in\Omega$. If $\mathcal{G}_{\mathcal{B}_1\times \mathcal{B}_2}(x^*,y^*) =0$, with $(x^*, y^*)$ lying in the interior of $\mathcal{B}_1\times \mathcal{B}_2$, we can conclude that $(x^*, y^*)\in\Omega$.

Next, we will revisit the fundamental lemmas that will be used later.

\begin{lemma}\label{le_S1}
For any $x,y,z \in\mathbb{R}^n$, and $r>0$, the following equalities hold:
\begin{eqnarray}
	&& \frac{1}{2}\|x\|^2-\frac{1}{2}\|y\|^2=\langle x,x-y\rangle-\frac{1}{2}\|x-y\|^2,\label{eq_know}\\
	 &&\langle x,y\rangle\leq \frac{1}{2}\left(r\|x\|^2+\frac{1}{r}\|y\|^2\right).\label{eq_know1}
\end{eqnarray}
\end{lemma}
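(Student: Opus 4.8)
The plan is to establish both relations by direct algebraic expansion, since each reduces to a standard identity in the inner-product space $\mathbb{R}^n$ and requires nothing beyond the bilinearity of $\langle\cdot,\cdot\rangle$ together with the definition $\|\cdot\|^2 = \langle\cdot,\cdot\rangle$.

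For the equality \eqref{eq_know}, I would work from the right-hand side. Expanding by bilinearity gives $\langle x, x-y\rangle = \|x\|^2 - \langle x,y\rangle$ and $\|x-y\|^2 = \|x\|^2 - 2\langle x,y\rangle + \|y\|^2$. Substituting these into $\langle x, x-y\rangle - \tfrac{1}{2}\|x-y\|^2$, the cross terms $\langle x,y\rangle$ cancel and the $\tfrac{1}{2}\|x\|^2$ contributions combine, leaving precisely $\tfrac{1}{2}\|x\|^2 - \tfrac{1}{2}\|y\|^2$. This is just the polarization identity rewritten, so it amounts to routine bookkeeping.

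For the inequality \eqref{eq_know1}, I would invoke the nonnegativity of a squared norm. Since $r>0$, I consider the vector $\sqrt{r}\,x - \tfrac{1}{\sqrt{r}}\,y$ and use $\left\|\sqrt{r}\,x - \tfrac{1}{\sqrt{r}}\,y\right\|^2 \geq 0$. Expanding the square produces $r\|x\|^2 - 2\langle x,y\rangle + \tfrac{1}{r}\|y\|^2 \geq 0$, and rearranging yields the claimed Young-type bound $\langle x,y\rangle \leq \tfrac{1}{2}\left(r\|x\|^2 + \tfrac{1}{r}\|y\|^2\right)$.

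There is no genuine obstacle in this lemma; both parts are elementary. The only point meriting slight care is the choice of the auxiliary vector in the second part: splitting the weights as $\sqrt{r}\,x$ against $y/\sqrt{r}$ is exactly what makes the coefficients $r$ and $1/r$ emerge in the stated form, whereas a different split would produce a differently-weighted inequality. I also note that the variable $z$ appearing in the hypothesis does not enter either relation, so only $x$, $y$, and $r$ are needed.
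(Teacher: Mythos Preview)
Your proposal is correct; both arguments are the standard ones and go through exactly as you describe. The paper itself states this lemma without proof, treating both identities as well-known, so your direct algebraic verification is precisely what the paper implicitly relies on.
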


From \cite{Nesterov1983} and \cite[Lemma A.3]{HeADMM}, we can get the following result.
\begin{lemma}\label{le_S2}
	The positive sequence $\{t_k\}_{k\geq 1}$ generated by $t_{k+1} = \min\left\{\frac{1+\sqrt{1+4t_k^2}}{2}, \sqrt{t_k^2+a t_k}\right\}$ with $a>0$ and $t_1\geq 1$ satisfies $t_{k}\geq \min\{\frac{1}{2},b\}(k+1)$, where $b = \frac{2a t_1}{a+4t_1}$.
\end{lemma}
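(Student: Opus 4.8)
The plan is to bypass any argument about $t_{k+1}^2$ and instead reduce the whole statement to a uniform lower bound on the per-step increment $t_{k+1}-t_k$. Write the two candidates inside the minimum as $A_k := \frac{1+\sqrt{1+4t_k^2}}{2}$ and $B_k := \sqrt{t_k^2+at_k}$, so that $t_{k+1}=\min\{A_k,B_k\}$. The idea is that each candidate admits a clean lower bound on its gap to $t_k$, and that the smaller of these two increments is exactly $\min\{\frac12,b\}$; once this is in hand, a telescoping argument finishes the proof.

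First I would record monotonicity. Since $\sqrt{1+4t_k^2}>2t_k$ we have $A_k>t_k$, and since $at_k>0$ we have $B_k>t_k$; hence $t_{k+1}=\min\{A_k,B_k\}>t_k$, so $\{t_k\}$ is positive and strictly increasing, and in particular $t_k\ge t_1\ge 1$ for every $k$. This monotonicity is what will later let me replace a running value $t_k$ by the fixed initial value $t_1$.

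Next come the two increment bounds. For the FISTA branch, the identity $A_k^2-A_k=t_k^2$ gives $A_k-t_k=\frac{A_k^2-t_k^2}{A_k+t_k}=\frac{A_k}{A_k+t_k}>\frac12$, using $A_k>t_k$. For the other branch, $B_k^2-t_k^2=at_k$ gives $B_k-t_k=\frac{at_k}{B_k+t_k}$; here I would use the elementary estimate $B_k=\sqrt{t_k^2+at_k}\le t_k+\frac a2$ (verified by squaring) to get $B_k+t_k\le 2t_k+\frac a2$, and then, since $\frac{at_k}{2t_k+a/2}=\frac{a}{2+a/(2t_k)}$ is increasing in $t_k$ and $t_k\ge t_1$, conclude $B_k-t_k\ge\frac{at_1}{2t_1+a/2}=\frac{2at_1}{4t_1+a}=b$. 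Because $t_{k+1}$ equals whichever of $A_k,B_k$ is smaller, in every case $t_{k+1}-t_k\ge\min\{\frac12,b\}$. Finally I telescope: $t_k\ge t_1+(k-1)\min\{\frac12,b\}\ge 1+(k-1)\min\{\frac12,b\}$, and since $\min\{\frac12,b\}\le\frac12$ forces $1\ge 2\min\{\frac12,b\}$, this yields $t_k\ge\min\{\frac12,b\}\,(k+1)$, as claimed.

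The main obstacle—indeed the only non-routine step—is the increment bound $B_k-t_k\ge b$ for the second branch. The crux is to choose the right intermediate estimate $\sqrt{t_k^2+at_k}\le t_k+\frac a2$ and then to exploit the monotonicity of the sequence to replace the running $t_k$ in the denominator by its initial value $t_1$; this substitution is precisely what manufactures the constant $b=\frac{2at_1}{a+4t_1}$. Everything else (monotonicity, the FISTA increment, and the telescoping with $t_1\ge 1$) is routine.
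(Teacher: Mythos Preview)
Your argument is correct and self-contained. The paper does not actually prove this lemma; it only cites \cite{Nesterov1983} and \cite[Lemma A.3]{HeADMM}. Your approach---bounding each branch's increment from below by $\tfrac12$ and $b$ respectively, then telescoping and absorbing $t_1\ge 1\ge 2\min\{\tfrac12,b\}$---is the standard and cleanest way to obtain the stated linear growth, and it reproduces exactly the constant $b=\frac{2at_1}{a+4t_1}$ that appears in the statement.
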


\begin{lemma}\label{le_S3}
	If $f:\mathbb{R}^{n}\to\mathbb{R}$ is a convex function and has  a Lipschitz continuous gradient with constant $L_{f}$, then for any $x,y,z\in\mathbb{R}^{m}$,  we have
	\begin{eqnarray*}\label{eq_lip}
   \langle \nabla f(z), x-y\rangle\geq   f(x)-f(y)-\frac{L_f}{2}\|x-z\|^2.
  \end{eqnarray*}
\end{lemma}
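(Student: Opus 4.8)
The plan is to insert the point $z$ into the inner product and then bound the two resulting pieces separately, using the two standard first-order consequences of convexity and Lipschitz continuity of the gradient. Concretely, I would first write $x-y=(x-z)+(z-y)$, so that
\[
\langle \nabla f(z), x-y\rangle = \langle \nabla f(z), x-z\rangle + \langle \nabla f(z), z-y\rangle .
\]
The task then reduces to estimating each of these two terms from below, after which the auxiliary value $f(z)$ should cancel and leave exactly $f(x)-f(y)-\frac{L_f}{2}\|x-z\|^2$.

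For the first term $\langle \nabla f(z), x-z\rangle$ I would invoke the descent lemma, a well-known consequence of the $L_f$-Lipschitz continuity of $\nabla f$: for all $x,z$,
\[
f(x)\le f(z)+\langle \nabla f(z), x-z\rangle+\frac{L_f}{2}\|x-z\|^2,
\]
which rearranges to $\langle \nabla f(z), x-z\rangle\ge f(x)-f(z)-\frac{L_f}{2}\|x-z\|^2$. If one wishes to justify this from scratch, it follows by integrating $t\mapsto \langle \nabla f(z+t(x-z)), x-z\rangle$ over $[0,1]$ and applying the Lipschitz bound to $\nabla f(z+t(x-z))-\nabla f(z)$. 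For the second term $\langle \nabla f(z), z-y\rangle$ I would use only the gradient inequality for a convex function, namely $f(y)\ge f(z)+\langle \nabla f(z), y-z\rangle$, which is equivalent to $\langle \nabla f(z), z-y\rangle\ge f(z)-f(y)$.

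Adding the two lower bounds gives
\[
\langle \nabla f(z), x-y\rangle \ge \Big(f(x)-f(z)-\tfrac{L_f}{2}\|x-z\|^2\Big)+\big(f(z)-f(y)\big)=f(x)-f(y)-\tfrac{L_f}{2}\|x-z\|^2,
\]
which is the assertion. There is no deep obstacle here; the only thing requiring care is the sign bookkeeping in the decomposition, so that the two occurrences of $f(z)$ cancel and the quadratic term $\frac{L_f}{2}\|x-z\|^2$ is attached to the $x$--$z$ piece (where the descent lemma applies) rather than to the purely convex $z$--$y$ piece. I also note that the statement writes $x,y,z\in\mathbb{R}^{m}$ while $f$ is defined on $\mathbb{R}^n$; this is a harmless typo, since the argument is dimension-agnostic.
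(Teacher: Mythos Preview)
Your argument is correct and essentially identical to the paper's: both insert the auxiliary point $z$, apply the descent lemma (Lipschitz gradient) to the $x$--$z$ piece and the convexity inequality to the $z$--$y$ piece, and then cancel $f(z)$. The only cosmetic difference is that the paper splits $f(x)-f(y)$ as $(f(x)-f(z))+(f(z)-f(y))$ rather than splitting the inner product, but the two estimates used are the same.
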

\begin{proof} 
Since $f$ has a Lipschitz continuous gradient, it follows from \cite[Theorem 2.1.5]{Nesterov2018} that
\[f(y) -f(x)-\langle \nabla f(x),y-x\rangle \leq \frac{L_{f}}{2}\|y-x\|^2, \quad \forall x,y\in\mathbb{R}^m.\]
This together with the convexity of $f$ implies
\begin{eqnarray*}
    f(x)-f(y) &=& f(x)-f(z)+f(z)-f(y)\\
			 &\leq& \langle \nabla f(z), x-z\rangle+\frac{L_f}{2}\|x-z\|^2+ \langle \nabla f(z), z-y\rangle\\
			 &=& \langle\nabla f(z), x-y\rangle+\frac{L_f}{2}\|x-z\|^2.
\end{eqnarray*} 
It yields the result.
\end{proof}

\begin{lemma}\cite[Lemma 4]{HeAuto} \label{le_S4}
	Let $\{h_k\}_{k\geq 1}$ be a sequence of vectors in $\mathbb{R}^{n}$, $\{a_k\}_{k\geq 1}$ be a sequence in $[0,1)$, and $C\geq  0$. Assume  that
	\[\left\|h_{k+1}+\sum_{i=1}^{k} a_{i} h_{i}\right\|\leq C, \quad\forall k\geq 1.\]
	 Then, $\sup_{k\geq 0}\|h_k\|\leq \|h_1\|+2C$.
\end{lemma}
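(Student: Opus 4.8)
The plan is to convert the hypothesis into a first-order recursion and then find a change of variables that turns it into a genuine contraction. First I would abbreviate $b_k := h_{k+1}+\sum_{i=1}^{k}a_i h_i$, so that the assumption reads simply $\|b_k\|\le C$ for all $k\ge 1$. Subtracting consecutive instances of this identity, for $k\ge 2$ I get
\[
b_k-b_{k-1}=h_{k+1}-h_k+a_k h_k=h_{k+1}-(1-a_k)h_k,
\]
which rearranges to the recursion $h_{k+1}=(1-a_k)h_k+(b_k-b_{k-1})$, with $1-a_k\in(0,1]$ since $a_k\in[0,1)$.

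Iterating this recursion directly does not work: the forcing term $b_k-b_{k-1}$ is only controlled by $2C$ in norm, and accumulating such terms would give a bound growing with $k$. The decisive step is therefore to track the shifted quantity $v_k:=h_k-b_{k-1}$ (for $k\ge 2$) instead of $h_k$ itself. Substituting $h_k=v_k+b_{k-1}$ into the recursion and simplifying produces
\[
v_{k+1}=(1-a_k)v_k-a_k b_{k-1}.
\]
This is the heart of the matter: the new forcing term $a_k b_{k-1}$ has norm at most $a_k C$, so the recursion is a genuine convex-combination-type contraction and no blow-up can occur.

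From here the argument is routine. Taking norms gives $\|v_{k+1}\|\le(1-a_k)\|v_k\|+a_kC$, a convex combination of $\|v_k\|$ and $C$, hence $\|v_{k+1}\|\le\max\{\|v_k\|,C\}$. For the base case I would compute $v_2=h_2-b_1=-a_1 h_1$, so $\|v_2\|=a_1\|h_1\|\le\|h_1\|$. A straightforward induction then yields $\|v_k\|\le\max\{\|h_1\|,C\}$ for every $k\ge 2$. Recovering $h_k=v_k+b_{k-1}$ and using $\|b_{k-1}\|\le C$,
\[
\|h_k\|\le\|v_k\|+\|b_{k-1}\|\le\max\{\|h_1\|,C\}+C\le\|h_1\|+2C,
\]
and since the bound is trivial for $k=1$, this gives $\sup_{k\ge 1}\|h_k\|\le\|h_1\|+2C$.

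The hard part is not any single computation but spotting the right variable. The naive recursion in $h_k$ simply does not close, and one must recognize that passing to $v_k=h_k-b_{k-1}$ replaces the difference $b_k-b_{k-1}$ (bounded only by $2C$) with the single term $a_kb_{k-1}$ (bounded by $a_kC$), which is exactly what makes the contraction argument go through.
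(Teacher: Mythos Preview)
Your proof is correct. The paper does not supply its own argument for this lemma; it is quoted from \cite{HeAuto} without proof, so there is nothing here to compare against. The change of variables $v_k=h_k-b_{k-1}$ is indeed the decisive step, converting the two-term forcing $b_k-b_{k-1}$ into the damped single term $-a_kb_{k-1}$, after which the convex-combination bound $\|v_{k+1}\|\le\max\{\|v_k\|,C\}$ closes by induction from $\|v_2\|=a_1\|h_1\|\le\|h_1\|$. All computations check, including the final estimate $\max\{\|h_1\|,C\}+C\le\|h_1\|+2C$.
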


\section{Convergence rate analysis}
In this section, we will demonstrate that the proposed algorithm exhibits a non-ergodic convergence rate of $\mathcal{O}(1/k^2)$.

To prove the non-ergodic convergence rate of Algorithm \ref{al1}, we begin by introducing the energy sequence  $\{\mathcal{E}_k(x,y)\}_{k\geq 1}$. Let $\{(x_k,y_k,u_k,v_k)\}_{k \geq 1}$ be the sequence generated by Algorithm \ref{al1}.  For any $(x,y)\in\mathbb{R}^n\times\mathbb{R}^m$, $\mathcal{E}_k(x,y)$ is defined  as \begin{eqnarray}\label{energy_1}
	\mathcal{E}_k(x,y) =I_k^1 +I_k^2+I_k^3+I_k^4
\end{eqnarray}
with 
\begin{numcases}{}
	 I_k^1 = t_k^2(\mathcal{L}(x_k, y)-\mathcal{L}(x,y_k)),\nonumber \\
	 I_k^2 = \frac{1}{2\alpha}\|u_k-x\|^2,\nonumber \\
	 I_k^3 = \frac{t_{k+1}^2}{2\beta}\|v_k-y\|^2,\nonumber  \\
	 I_k^4 = -t_k\langle K(u_k-x),v_k-v_{k-1}\rangle+\frac{t_k^2-\beta L_{g_2}}{2\beta}\|v_k-v_{k-1}\|^2. \nonumber 
\end{numcases}
Suppose that Assumption \ref{assA} holds. Let $\{(x_k,y_k,$ $u_k,v_k)\}_{k \geq 1}$ be the sequence generated by Algorithm \ref{al1}. Now, let us estimate $I_{k+1}^1-I_k^1$ -- $I_{k+1}^4-I_k^4$.

{\bf Estimate $I_{k+1}^1-I_k^1$}: From Algorithm \ref{al1}, we can easily get
\begin{equation}\label{eq_hxa_3}
	(x_{k+1},y_{k+1}) = \frac{t_{k+1}-1}{t_{k+1}}(x_{k},y_k) + \frac{1}{t_{k+1}}(u_{k+1},v_{k+1}).
\end{equation}
  Since $f_1$ and $g_1$ are  convex, we have
  \begin{eqnarray}\label{eq_hxa_1}
	 f_1(x_{k+1})-f_1(x) +\langle K(x_{k+1}-x),y \rangle &\leq &  \frac{t_{k+1}-1}{t_{k+1}}(f_1(x_{k})-f_1(x)+\langle K(x_{k}-x),y \rangle)\nonumber\\
	&&  +  \frac{1}{t_{k+1}} (f_1(u_{k+1})-f_1(x) +\langle K(u_{k+1}-x),y \rangle)
\end{eqnarray}
and
\begin{eqnarray}\label{eq_hxa_2}
	 g_1(y_{k+1})-g_1(y)-\langle Kx,y_{k+1}-y\rangle &\leq & \frac{t_{k+1}-1}{t_{k+1}}(g_1(y_{k})-g_1(y)-\langle Kx,y_{k}-y\rangle)\nonumber \\
	&&  +  \frac{1}{t_{k+1}} (g_1(v_{k+1})-g_1(y)-\langle Kx,v_{k+1}-y\rangle).
\end{eqnarray}
Since
\begin{eqnarray*}
&&	I_{k+1}^1-  I_k^1 =  t_{k+1}^2(\mathcal{L}(x_{k+1},y)-\mathcal{L}(x,y_{k+1}))-t_{k}^2(\mathcal{L}(x_k, y)-\mathcal{L}(x,y_k))\\
&&\quad =  t_{k+1}^2(f(x_{k+1})+g(y_{k+1})-f(x)-g(y)+\langle K(x_{k+1}-x),y\rangle-\langle Kx,y_{k+1}-y\rangle)\\
&&\qquad - t_{k+1}(t_{k+1}-1)(f(x_{k})+g(y_{k})-f(x)-g(y)+\langle K(x_{k}-x),y\rangle-\langle Kx,y_{k}-y\rangle)\\
&&\qquad + (t_{k+1}(t_{k+1}-1)-t_k^2)(\mathcal{L}(x_k, y)-\mathcal{L}(x,y_k)).
\end{eqnarray*}
 This together with \eqref{eq_hxa_3}- \eqref{eq_hxa_2} implies the  following two estimate:
\begin{eqnarray}\label{eq_es1A}
&&	I_{k+1}^1-  I_k^1\leq t_{k+1}(g_1(v_{k+1})-g_1(y)-\langle Kx,v_{k+1}-y\rangle)\nonumber \\
&&\qquad  + t_{k+1}(f(x_{k+1})+g_2(y_{k+1})-f(x)-g_2(y))\nonumber \\
&&\qquad +t_{k+1}(t_{k+1}-1)(f(x_{k+1})+g_2(y_{k+1}) - f(x_k)-g_2(y_k))\\
&&\qquad +t_{k+1}\langle K^Ty, u_{k+1}-x\rangle+ (t_{k+1}(t_{k+1}-1)-t_k^2)(\mathcal{L}(x_k, y)-\mathcal{L}(x,y_k))\nonumber
\end{eqnarray}
and 
\begin{eqnarray}\label{eq_es1B}
&&	I_{k+1}^1-  I_k^1 \leq t_{k+1}(f_1(u_{k+1})+g_1(v_{k+1})-f_1(x)-g_1(y)+\langle K(u_{k+1}-x),y\rangle-\langle Kx,v_{k+1}-y\rangle)\nonumber  \\
&&\qquad  + t_{k+1}(f_2(x_{k+1})+g_2(y_{k+1})-f_2(x)-g_2(y))\\
&&\qquad +t_{k+1}(t_{k+1}-1)(f_2(x_{k+1})+g_2(y_{k+1}) - f_2(x_k)-g_2(y_k))\nonumber\\
&&\qquad+ (t_{k+1}(t_{k+1}-1)-t_k^2)(\mathcal{L}(x_k, y)-\mathcal{L}(x,y_k)).\nonumber
 \end{eqnarray}

{\bf Estimate $I_{k+1}^2-I_k^2$}: From Algorithm \ref{al1}, we can get
\begin{eqnarray}\label{eq_he1}
	u_{k+1}-u_k &=& t_{k+1}(x_{k+1}-x_k)-(t_k-1)(x_{k}-x_{k-1})\nonumber \\
	&=& t_{k+1}\left(x_{k+1}-\left(x_k+\frac{t_k-1}{t_{k+1}}(x_{k}-x_{k-1})\right)\right)\\
	&=& t_{k+1}(x_{k+1}-\bar{x}_k)\nonumber
\end{eqnarray}
and
\begin{eqnarray*}
	 u_{k+1}-x = x_{k+1}-x + (t_{k+1}-1)(x_{k+1}-x_k). 
\end{eqnarray*}
Since $f_2$ is a convex function and  has an $L_{f_2}$-Lipschitz continuous gradient, from Lemma \ref{le_S3}  we have
\begin{eqnarray}\label{eq_he2}
	&& \langle  u_{k+1}-x,\nabla f_2(\bar{x}_k) \rangle= \langle x_{k+1}-x,\nabla f_2(\bar{x}_k) \rangle+ (t_{k+1}-1)\langle x_{k+1}-x_k,\nabla f_2(\bar{x}_k) \rangle\nonumber \\
	&&\qquad \geq f_2(x_{k+1})-f_2(x) +(t_{k+1}-1)(f_2(x_{k+1})-f_2(x_k))-\frac{L_{f_2}t_{k+1}}{2 }\|x_{k+1}-\bar{x}_k\|^2\\
	&&\qquad= f_2(x_{k+1})-f_2(x) +(t_{k+1}-1)(f_2(x_{k+1})-f_2(x_k))-\frac{L_{f_2}}{2 t_{k+1}}\|u_{k+1}-u_k\|^2.\nonumber
\end{eqnarray}
As $f_1$ is convex, we can infer
\begin{eqnarray}\label{eq_he3}
	&& \langle  u_{k+1}-x, \widetilde{\nabla} f_1(x_{k+1}) \rangle= \langle x_{k+1}-x,\widetilde{\nabla} f_1(x_{k+1}) \rangle+ (t_{k+1}-1)\langle x_{k+1}-x_k,\widetilde{\nabla} f_1(x_{k+1}) \rangle\nonumber\\
	&&\quad\geq f_1(x_{k+1})-f_1(x) +(t_{k+1}-1)(f_1(x_{k+1})-f_1(x_k))
\end{eqnarray}
and 
\begin{eqnarray}\label{eq_heA3}
	&& \langle  u_{k+1}-x, \widetilde{\nabla} f_1(u_{k+1}) \rangle\geq f_1(u_{k+1})-f_1(x).
\end{eqnarray}

If update $(x_{k+1},u_{k+1})$ by {\bf Option 1},  using the optimality condition, we get
 \begin{eqnarray*}
	x_{k+1}-\bar{x}_k=-\alpha\left(\widetilde{\nabla} f_1(x_{k+1})+\nabla f_2(\bar{x}_k) + K^T\left({v}_k+\frac{t_k}{t_{k+1}}(v_k-v_{k-1})\right)\right).
\end{eqnarray*}
Then, it follows from \eqref{eq_know} and \eqref{eq_he1} that 
\begin{eqnarray}\label{eq_es2A}
	I_{k+1}^2-I_k^2 &=& \frac{1}{2\alpha}\|u_{k+1}-x\|^2-\frac{1}{2\alpha}\|u_k-x\|^2\nonumber\\
	&=& \frac{1}{\alpha} \langle  u_{k+1}-x, u_{k+1}-u_k\rangle-\frac{1}{2\alpha}\|u_{k+1}-u_k\|^2\nonumber\\
	&=&-t_{k+1}\left\langle u_{k+1}-x, \widetilde{\nabla} f_1(x_{k+1})+\nabla f_2(\bar{x}_k) + K^T\left({v}_k+\frac{t_k}{t_{k+1}}(v_k-v_{k-1})\right)\right\rangle\nonumber\\
	&&-\frac{1}{2\alpha}\|u_{k+1}-u_k\|^2\\
	&\leq & -t_{k+1}(f(x_{k+1})-f(x) + (t_{k+1}-1)(f(x_{k+1})-f(x_k)))\nonumber\\
	&&-\frac{1-\alpha L_{f_2}}{2\alpha}\|u_{k+1}-u_k\|^2- t_{k+1}\langle u_{k+1}-x,K^Ty\rangle\nonumber\\
	&&-t_{k+1}\langle u_{k+1}-x,K^T({v}_{k}-y)\rangle-t_k\langle u_{k+1}-x, K^T(v_{k}-v_{k-1})\rangle,\nonumber
\end{eqnarray}
where the last inequality follows from \eqref{eq_he2} and \eqref{eq_he3}.

If update $(x_{k+1},u_{k+1})$ by {\bf Option 2},  using the optimality condition, we have
 \begin{eqnarray*}
	u_{k+1}-u_k=-\alpha t_{k+1}\left(\widetilde{\nabla} f_1(u_{k+1})+\nabla f_2(\bar{x}_k) + K^T\left({v}_k+\frac{t_k}{t_{k+1}}(v_k-v_{k-1})\right)\right).
\end{eqnarray*}

This,  in conjunction with \eqref{eq_know}, \eqref{eq_he2} and \eqref{eq_heA3}, implies
\begin{eqnarray}\label{eq_es2B}
	I_{k+1}^2-I_k^2 &=& \frac{1}{\alpha} \langle  u_{k+1}-x, u_{k+1}-u_k\rangle-\frac{1}{2\alpha}\|u_{k+1}-u_k\|^2\nonumber \\
	&=&-t_{k+1}\left\langle u_{k+1}-x, \widetilde{\nabla} f_1(u_{k+1})+\nabla f_2(\bar{x}_k) + K^T\left({v}_k+\frac{t_k}{t_{k+1}}(v_k-v_{k-1})\right)\right\rangle\nonumber \\
	&&-\frac{1}{2\alpha}\|u_{k+1}-u_k\|^2\\
&\leq & -t_{k+1}(f_1(u_{k+1})-f_1(x) +\langle u_{k+1}-x,K^Ty\rangle)-\frac{1-\alpha L_{f_2}}{2\alpha}\|u_{k+1}-u_k\|^2\nonumber \\
	&&-t_{k+1}(f_2(x_{k+1})-f_2(x) + (t_{k+1}-1)( f_2(x_{k+1})-f_2(x_k)))\nonumber \\
	&&-t_{k+1}\langle u_{k+1}-x,K^T({v}_{k}-y)\rangle-t_k\langle u_{k+1}-x, K^T(v_{k}-v_{k-1})\rangle.\nonumber 
\end{eqnarray}

{\bf Estimate $I_{k+1}^3-I_k^3$}:  From Algorithm \ref{al1}, we can get
\begin{eqnarray}\label{eq_heA5}
	v_{k+1}-v_k = t_{k+1}(y_{k+1}-\bar{y}_k)\nonumber
\end{eqnarray}
and
\begin{eqnarray*}
	 v_{k+1}-y = y_{k+1}-y + (t_{k+1}-1)(y_{k+1}-y_k). 
\end{eqnarray*}
 Similar to the estimate of \eqref{eq_he2}, we get
\begin{eqnarray}\label{eq_he4}
	&& \langle  v_{k+1}-y,\nabla g_2(\bar{y}_k) \rangle \nonumber \\
	&&\quad\geq  g_2(y_{k+1})-g_2(y) + (t_{k+1}-1)(g_2(y_{k+1})-g_2(y_k))-\frac{L_{g_2}}{2t_{k+1}}\|v_{k+1}-v_k\|^2.
\end{eqnarray}
By using the optimality condition, we can obtain
\[v_{k+1}-v_k = -\frac{\beta}{t_{k+1}}(\widetilde{\nabla} g_1({v_{k+1}})+g_2({\bar{y}_{k}})-Ku_{k+1}).\] 
Then from \eqref{eq_know} and \eqref{eq_he4}, we have
\begin{eqnarray}\label{eq_es3}
	I_{k+1}^3-I_k^3 &=& \frac{t_{k+2}^2-t_{k+1}^2}{2\beta}\|v_{k+1}-y\|^2 + \frac{t_{k+1}^2}{2\beta}(\|v_{k+1}-y\|^2- \|v_k-y\|^2)\nonumber \\
	&=& \frac{t_{k+2}^2-t_{k+1}^2}{2\beta}|v_{k+1}-y\|^2-\frac{t_{k+1}^2}{2\beta}\|v_{k+1}-v_k\|^2\nonumber \\
	&&-t_{k+1}\langle v_{k+1}-y,  \widetilde{\nabla} g_1(v_{k+1})+\nabla g_2(\bar{y}_k)- K{u}_{k+1}\rangle\\
	&\leq & -t_{k+1}(g_1(v_{k+1})-g_1(y) -\langle v_{k+1}-y,Kx\rangle)-\frac{t_{k+1}^2-\beta L_{g_2}}{2\beta}\|v_{k+1}-v_k\|^2\nonumber \\
	&&-t_{k+1}(g_2(y_{k+1})-g_2(y) + (t_{k+1}-1)(g_2(y_{k+1})-g_2(y_k)))\nonumber \\
	&&+\frac{t_{k+2}^2-t_{k+1}^2-\mu_g\beta t_{k+1}}{2\beta}\|v_{k+1}-y\|^2+t_{k+1}\langle v_{k+1}-y,K({u}_{k+1}-x)\rangle,\nonumber 
\end{eqnarray}
where the last inequality follows from the strong convexity of $g_1$.

{\bf Estimate $I_{k+1}^4-I_k^4$}: By direct calculation, we get 
 \begin{eqnarray}\label{eq_es4}
	I_{k+1}^4 -I_k^4 &= & -t_{k+1}\langle K(u_{k+1}-x),v_{k+1}-v_{k}\rangle +t_k\langle K(u_k-x),v_k-v_{k-1}\rangle\\
	&&+ \frac{t_{k+1}^2-\beta L_{g_2}}{2\beta}\|v_{k+1}-v_{k}\|^2- \frac{t_{k}^2-\beta L_{g_2}}{2\beta}\|v_k-v_{k-1}\|^2.\nonumber 
\end{eqnarray}

Now, we investigate the properties of  $\mathcal{E}_k(x,y)$.

\begin{lemma}\label{le_A1}
Suppose that Assumption \ref{assA} holds. Let $\{(x_k,y_k,$ $u_k,v_k)\}_{k \geq 1}$ be the sequence generated by Algorithm \ref{al1} under the following parameter assumptions: 
\begin{equation}\label{para_alp1}
	\begin{cases}
	&	\alpha \beta \|K\|^2< (1-\alpha L_{f_2})\left(1-\frac{\beta L_{g_2}}{t_1^2}\right),  \\
	 & \alpha< \frac{1}{L_{f_2}},\quad \beta<\frac{t_1^2}{L_{g_2}}.
	\end{cases}
\end{equation}
Let the energy sequence $\{\mathcal{E}_k(x,y)\}_{k\geq 1}$ be defined in \eqref{energy_1}. Then  for any $(x,y)\in\mathbb{R}^n\times \mathbb{R}^m$, we have
\begin{eqnarray*}
 \mathcal{E}_{k+1}(x,y)- \mathcal{E}_k(x,y)\leq (t_{k+1}(t_{k+1}-1)-t_k^2)(\mathcal{L}(x_k, y)-\mathcal{L}(x,y_k))
\end{eqnarray*}and 
\[\mathcal{E}_k(x,y) \geq  t_k^2(\mathcal{L}(x_k, y)-\mathcal{L}(x,y_k))+\left(\frac{1}{2\alpha}-\frac{t_1^2\beta\|K\|^2}{2(t_1^2-{\beta L_{g_2}})}\right)\|u_k-x\|^2+\frac{t_{k+1}^2}{2\beta}\|v_k-y\|^2.\]
\end{lemma}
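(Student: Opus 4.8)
The plan is to establish the two inequalities separately, taking as given the four estimates $I_{k+1}^j-I_k^j$ $(j=1,2,3,4)$ derived above. First I would record a preliminary fact about $\{t_k\}$: since both candidates in $t_{k+1}=\min\{(1+\sqrt{1+4t_k^2})/2,\ \sqrt{t_k^2+\mu_g\beta t_k}\}$ exceed $t_k$ for $t_k\ge 0$, the sequence is nondecreasing, so $t_k\ge t_1$ for all $k$. Combined with $\beta<t_1^2/L_{g_2}$ this yields $t_k^2-\beta L_{g_2}\ge t_1^2-\beta L_{g_2}>0$, keeping every denominator below positive.

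For the descent inequality I would sum the four estimates, using \eqref{eq_es1A}, \eqref{eq_es2A}, \eqref{eq_es3}, \eqref{eq_es4} for Option 1 and \eqref{eq_es1B}, \eqref{eq_es2B}, \eqref{eq_es3}, \eqref{eq_es4} for Option 2. The decisive structural point is that both options are arranged so that \emph{all} function-value differences cancel: the $f$- (resp.\ $f_1,f_2$-), $g_1$- and $g_2$-contributions generated by $I_{k+1}^1-I_k^1$ are exactly annihilated by those from $I_{k+1}^2-I_k^2$ and $I_{k+1}^3-I_k^3$ (using $\langle Kx,v-y\rangle=\langle v-y,Kx\rangle$ and $\langle K(u-x),y\rangle=\langle u-x,K^Ty\rangle$). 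Likewise every inner product against the fixed $y$ cancels, and a short computation collapses the surviving $K$-coupling terms, namely $-t_{k+1}\langle K(u_{k+1}-x),v_k-y\rangle+t_{k+1}\langle K(u_{k+1}-x),v_{k+1}-y\rangle-t_{k+1}\langle K(u_{k+1}-x),v_{k+1}-v_k\rangle$ together with the $t_k$-terms of \eqref{eq_es4}, into the single term $-t_k\langle K(u_{k+1}-u_k),v_k-v_{k-1}\rangle$. The $\|v_{k+1}-v_k\|^2$ terms of \eqref{eq_es3} and \eqref{eq_es4} cancel as well, and both options lead to the same residual.

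After this bookkeeping, what remains besides the desired factor $(t_{k+1}(t_{k+1}-1)-t_k^2)(\mathcal{L}(x_k,y)-\mathcal{L}(x,y_k))$ is
\begin{align*}
&\frac{t_{k+2}^2-t_{k+1}^2-\mu_g\beta t_{k+1}}{2\beta}\|v_{k+1}-y\|^2
-\frac{1-\alpha L_{f_2}}{2\alpha}\|u_{k+1}-u_k\|^2\\
&\quad-t_k\langle K(u_{k+1}-u_k),v_k-v_{k-1}\rangle
-\frac{t_k^2-\beta L_{g_2}}{2\beta}\|v_k-v_{k-1}\|^2,
\end{align*}
and I must show this is $\le 0$. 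The first term is nonpositive because the definition of $t_{k+2}$ gives $t_{k+2}^2\le t_{k+1}^2+\mu_g\beta t_{k+1}$. For the last three I would view them as a quadratic form in $(\|u_{k+1}-u_k\|,\|v_k-v_{k-1}\|)$; bounding the cross term by Cauchy--Schwarz and \eqref{eq_know1}, nonpositivity is equivalent to $(1-\alpha L_{f_2})(t_k^2-\beta L_{g_2})\ge \alpha\beta t_k^2\|K\|^2$, i.e.\ $(1-\alpha L_{f_2})(1-\beta L_{g_2}/t_k^2)\ge\alpha\beta\|K\|^2$. Since $t_k\ge t_1$ makes the left-hand side smallest at $k=1$, this is precisely the standing hypothesis \eqref{para_alp1}. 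I expect this to be the main obstacle, as it is where the full parameter condition is consumed and where monotonicity of $\{t_k\}$ is essential.

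For the lower bound I would complete the square in $I_k^4$. With $a=\frac{t_k^2-\beta L_{g_2}}{2\beta}>0$,
\begin{align*}
&a\|v_k-v_{k-1}\|^2-t_k\langle K(u_k-x),v_k-v_{k-1}\rangle\\
&\quad=a\Bigl\|v_k-v_{k-1}-\tfrac{t_k}{2a}K(u_k-x)\Bigr\|^2-\frac{t_k^2}{4a}\|K(u_k-x)\|^2
\ge-\frac{t_k^2\beta\|K\|^2}{2(t_k^2-\beta L_{g_2})}\|u_k-x\|^2 .
\end{align*}
Finally, since $t\mapsto t^2/(t^2-\beta L_{g_2})$ is decreasing and $t_k\ge t_1$, the coefficient is majorized by its value at $t_1$, giving $I_k^4\ge-\frac{t_1^2\beta\|K\|^2}{2(t_1^2-\beta L_{g_2})}\|u_k-x\|^2$. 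Substituting this into $\mathcal{E}_k=I_k^1+I_k^2+I_k^3+I_k^4$, where $I_k^2=\frac{1}{2\alpha}\|u_k-x\|^2$, $I_k^3=\frac{t_{k+1}^2}{2\beta}\|v_k-y\|^2$ and $I_k^1=t_k^2(\mathcal{L}(x_k,y)-\mathcal{L}(x,y_k))$, produces exactly the claimed bound.
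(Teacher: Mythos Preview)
Your proposal is correct and follows essentially the same route as the paper: sum the four prepared estimates, observe the function-value and $K$-coupling cancellations to arrive at the residual quadratic expression, dispatch the $\|v_{k+1}-y\|^2$ term via $t_{k+2}^2\le t_{k+1}^2+\mu_g\beta t_{k+1}$, and then use Young/Cauchy--Schwarz on the cross term with $t_k\ge t_1$ to invoke \eqref{para_alp1}. The paper phrases the last step by introducing an auxiliary split parameter $r\in(\beta L_{g_2}/t_1^2,\,1-\alpha\beta\|K\|^2/(1-\alpha L_{f_2}))$ and, for the lower bound, applies \eqref{eq_know1} with the choice that exactly absorbs $\frac{t_k^2-\beta L_{g_2}}{2\beta}\|v_k-v_{k-1}\|^2$ (then uses $\frac{\beta L_{g_2}t_k^2}{t_1^2}\ge\beta L_{g_2}$), whereas you use the discriminant condition and a completion of the square---these are cosmetically different implementations of the same inequality.
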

\begin{proof}
{\bf Case 1}: Update $(x_{k+1},u_{k+1})$ by {\bf Option 1}.

Combining \eqref{eq_es1A}, \eqref{eq_es2A}, \eqref{eq_es3} and \eqref{eq_es4}, we can get
\begin{eqnarray}\label{eq_he5}
	&& \mathcal{E}_{k+1}(x,y)-\mathcal{E}_k(x,y) =  I_{k+1}^1 -I_k^1+I_{k+1}^2 -I_k^2+I_{k+1}^3 -I_k^3+I_{k+1}^4 -I_k^4\nonumber\\
	&&\qquad\quad\leq (t_{k+1}(t_{k+1}-1)-t_k^2)(\mathcal{L}(x_k, y)-\mathcal{L}(x,y_k))-\frac{1-\alpha L_{f_2}}{2\alpha}\|u_{k+1}-u_k\|^2\\
	&&\qquad\qquad-\frac{t^2_{k}-\beta L_{g_2}}{2\beta}\|v_{k}-v_{k-1}\|^2-t_k\langle u_{k+1}-u_k, K^T(v_{k}-v_{k-1})\nonumber \\
	&&\qquad\qquad +\frac{t_{k+2}^2-t_{k+1}^2-\mu_g\beta t_{k+1}}{2\beta}\|v_{k+1}-y\|^2.\nonumber
\end{eqnarray}
From Algorithm \ref{al1}, we can get  
\[ t_{k+1}^2\leq t_k^2+\beta \mu_g t_k.\]
Then, from \eqref{eq_he5}, we have
\begin{eqnarray}\label{eq_he6}
	\mathcal{E}_{k+1}(x,y)-\mathcal{E}_k(x,y) &\leq & (t_{k+1}(t_{k+1}-1)-t_k^2)(\mathcal{L}(x_k, y)-\mathcal{L}(x,y_k))\nonumber \\
	&&+t_k\langle u_k-u_{k+1}, K^T(v_{k}-v_{k-1})\rangle-\frac{1-\alpha L_{f_2}}{2\alpha}\|u_{k+1}-u_k\|^2\nonumber \\
	&&-\frac{t_k^2-\beta L_{g_2}}{2\beta}\|v_{k}-v_{k-1}\|^2.
\end{eqnarray}
As we have
 \[ \alpha \beta \|K\|^2< (1-\alpha L_{f_2})\left(1-\frac{\beta L_{g_2}}{t_1^2}\right),  \]
we can select
\begin{equation}\label{chosR}
	r\in\left(\frac{\beta L_{g_2}}{t_1^2},1-\frac{\alpha\beta\|K\|^2}{1-\alpha L_{f_2}}\right).
\end{equation}
It follows from \eqref{para_alp1} that $0<r<1$. Then we can apply \eqref{eq_know1} to obtain
\begin{eqnarray*}
	 t_k\langle u_k-u_{k+1}, K^T(v_{k}-v_{k-1})\rangle \leq \frac{1}{2}\left(\frac{\beta\|K\|^2}{(1-r)}\| u_{k+1}-u_k\|^2+\frac{(1-r)t_k^2}{\beta}\|v_{k}-v_{k-1}\|^2\right).
\end{eqnarray*}
This, in combination with \eqref{eq_he6}, implies
\begin{eqnarray*}
&&\mathcal{E}_{k+1}(x,y)-\mathcal{E}_k(x,y)  \\
&&\qquad \leq  (t_{k+1}(t_{k+1}-1)-t_k^2)(\mathcal{L}(x_k, y)-\mathcal{L}(x,y_k))\\
&&\qquad\quad  -\left(\frac{1-\alpha L_{f_2}}{2\alpha}-\frac{\beta\|K\|^2}{2(1-r)}\right)\|u_{k+1}-u_k\|^2-\frac{rt_k^2-\beta L_{g_2}}{2\beta}\|v_{k}-v_{k-1}\|^2\\
&&\qquad \leq (t_{k+1}(t_{k+1}-1)-t_k^2)(\mathcal{L}(x_k, y)-\mathcal{L}(x,y_k)),
\end{eqnarray*}
where the last inequality follows from \eqref{chosR} and $t_k\geq t_1$ for all $k\geq 1$.

Using \eqref{eq_know1} again, we can derive
\[t_k\langle K(u_k-x),v_k-v_{k-1}\rangle \leq \frac{1}{2}\left(\frac{\beta\|K\|^2}{1-\frac{\beta L_{g_2}}{t_1^2}}\| u_{k}-x^*\|^2+\frac{t_k^2(1-\frac{\beta L_{g_2}}{t_1^2})}{\beta}\|v_{k}-v_{k-1}\|^2\right).\]
Moreover, since $\frac{\beta L_{g_2} t_k^2}{t_1^2}\geq  \beta L_{g_2}$,  we have
\begin{eqnarray*}
	I_{k}^2+I_{k}^4\geq \left(\frac{1}{2\alpha}-\frac{t_1^2\beta\|K\|^2}{2(t_1^2-{\beta L_{g_2}})}\right)\|u_k-x\|^2.
\end{eqnarray*}
Finally, from \eqref{energy_1}, we can deduce the results.

{\bf Case 2}: Update $(x_{k+1},u_{k+1})$ by {\bf Option 2}.

Combining \eqref{eq_es1B}, \eqref{eq_es2B}, \eqref{eq_es3} and \eqref{eq_es4}, we also can get the following inequality:
\begin{eqnarray*}
	&& \mathcal{E}_{k+1}(x,y)-\mathcal{E}_k(x,y) =  I_{k+1}^1 -I_k^1+I_{k+1}^2 -I_k^2+I_{k+1}^3 -I_k^3+I_{k+1}^4 -I_k^4\nonumber\\
	&&\qquad\quad\leq (t_{k+1}(t_{k+1}-1)-t_k^2)(\mathcal{L}(x_k, y)-\mathcal{L}(x,y_k))-\frac{1-\alpha L_{f_2}}{2\alpha}\|u_{k+1}-u_k\|^2\\
	&&\qquad\qquad-\frac{t^2_{k}-\beta L_{g_2}}{2\beta}\|v_{k}-v_{k-1}\|^2-t_k\langle u_{k+1}-u_k, K^T(v_{k}-v_{k-1})\nonumber \\
	&&\qquad\qquad +\frac{t_{k+2}^2-t_{k+1}^2-\mu_g\beta t_{k+1}}{2\beta}\|v_{k+1}-y\|^2.\nonumber
\end{eqnarray*}
Then, by similar arguments as Case 1, we also can get results.
\end{proof}

With Lemma \ref{le_A1} in hand, we prove the non-ergodic $\mathcal{O}(1/k^2)$ convergence rate of Algorithm \ref{al1}.

\begin{theorem}\label{th_A1}
Suppose that Assumption \ref{assA} holds. Let $\{(x_k,y_k,u_k,v_k)\}_{k \geq 1}$ be the sequence generated by Algorithm \ref{al1} with  parameters satisfying \eqref{para_alp1}, and  $(x^*,y^*)\in\Omega$  ($y^*$ is unique). Then the following  conclusions hold:
\begin{itemize}
  \item [(a).] The gap function satisfies
\[\mathcal{L}(x_k, y^*)-\mathcal{L}(x^*,y_k) \leq \frac{\mathcal{E}_1(x^*,y^*)}{t_k^2},  \]
  moreover, when $\beta\mu_g>1+1/t_1$,  for any compact set  $\mathcal{B}_1\times \mathcal{B}_2 \subset \mathbb{R}^n\times\mathbb{R}^m$ containing a saddle point:
\[\mathcal{G}_{\mathcal{B}_1\times \mathcal{B}_2}(x_k,y_k)  \leq \frac{D(B_1,B_2)}{t_{k}^2}, \]
where $D(B_1,B_2) = \max_{x\in B_1,y\in B_2} \mathcal{E}_1(x,y)$ and 
\[\mathcal{E}_1(x,y)= t_1^2(\mathcal{L}(x_1, y)-\mathcal{L}(x,y_1))+\frac{1}{2\alpha}\|x_1-x\|^2+\frac{t_2^2}{2\beta}\|y_1-y\|^2.\]
\item [(b).] The sequence $\{(x_k,y_k,u_k,v_k)\}_{k \geq 1}$ is bounded, and
\[\|y_k-y^*\|^2\leq \frac{2\mathcal{E}_1(x^*,y^*)}{\mu_g t_k^2} , \quad \|v_k-y^*\|^2\leq \frac{2\beta\mathcal{E}_1(x^*,y^*)}{t_{k+1}^2}, \]
\[\|x_{k}-x_{k-1}\|\leq \mathcal{O}\left(\frac{1}{t_k}\right), \quad \|y_{k}-y_{k-1}\|\leq \mathcal{O}\left(\frac{1}{t_k^2}\right).\]
\item [(c).] When $\beta\mu_g>1+1/t_1$: every limit point of $\{(x_k,y_k)\}_{k\ge 1}$ is a saddle point of problem \eqref{ques}.
\end{itemize}
\end{theorem}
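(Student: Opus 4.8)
The whole argument runs off the two inequalities of Lemma \ref{le_A1} together with the behaviour of the coefficient $c_k := t_{k+1}(t_{k+1}-1)-t_k^2$. The plan is first to pin down $c_k$. Because $t_{k+1}$ is a minimum it never exceeds the FISTA branch $\tfrac{1+\sqrt{1+4t_k^2}}{2}$, which is the root of $s(s-1)=t_k^2$; since $s\mapsto s(s-1)$ is increasing on $[1,\infty)$ and $t_{k+1}\ge 1$, this gives $c_k\le 0$ unconditionally. Moreover, writing $\phi(t)=\tfrac{1+\sqrt{1+4t^2}}{2t}$, one checks $\phi$ is decreasing with $\phi(t_1)\le 1+1/t_1$, so whenever $\beta\mu_g>1+1/t_1$ we get $\beta\mu_g t_k>\phi(t_k)t_k$ for every $k$, which (via $A^2=A+t_k^2$ for $A=\phi(t_k)t_k$) forces the FISTA branch to be the smaller one and hence $c_k\equiv 0$. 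This dichotomy is exactly what separates the two halves of part (a).

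For part (a), I evaluate Lemma \ref{le_A1} at the saddle point $(x^*,y^*)$: by \eqref{saddle} the gap $\mathcal{L}(x_k,y^*)-\mathcal{L}(x^*,y_k)\ge 0$, and $c_k\le0$, so the first inequality makes $\{\mathcal{E}_k(x^*,y^*)\}$ nonincreasing, hence $\le\mathcal{E}_1(x^*,y^*)$. The second inequality of Lemma \ref{le_A1} carries a \emph{nonnegative} coefficient on $\|u_k-x^*\|^2$ (this is what \eqref{para_alp1} buys, since $\alpha\beta\|K\|^2<(1-\alpha L_{f_2})(1-\beta L_{g_2}/t_1^2)$ gives $\tfrac{t_1^2\beta\|K\|^2}{t_1^2-\beta L_{g_2}}<1/\alpha$); dropping the nonnegative terms yields $t_k^2(\mathcal{L}(x_k,y^*)-\mathcal{L}(x^*,y_k))\le\mathcal{E}_1(x^*,y^*)$, the first estimate. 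For the partial-gap estimate I switch to the regime $\beta\mu_g>1+1/t_1$: then $c_k\equiv0$, so the first inequality gives $\mathcal{E}_{k+1}(x,y)\le\mathcal{E}_k(x,y)$ for every \emph{fixed} $(x,y)$, hence $\mathcal{E}_k(x,y)\le\mathcal{E}_1(x,y)$; combined with the lower bound this reads $t_k^2(\mathcal{L}(x_k,y)-\mathcal{L}(x,y_k))\le\mathcal{E}_1(x,y)$, and maximising both sides over $(x,y)\in\mathcal{B}_1\times\mathcal{B}_2$ gives the claim with $D(\mathcal{B}_1,\mathcal{B}_2)=\max\mathcal{E}_1$.

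For part (b), the lower bound at $(x^*,y^*)$ already isolates $\tfrac{t_{k+1}^2}{2\beta}\|v_k-y^*\|^2\le\mathcal{E}_1(x^*,y^*)$ and a uniform bound on $\|u_k-x^*\|$. For $\|y_k-y^*\|$ I use that $\mathcal{L}(x^*,\cdot)$ is $\mu_g$-strongly concave with maximiser $y^*$, so $\tfrac{\mu_g}{2}\|y_k-y^*\|^2\le\mathcal{L}(x^*,y^*)-\mathcal{L}(x^*,y_k)\le\mathcal{L}(x_k,y^*)-\mathcal{L}(x^*,y_k)\le\mathcal{E}_1(x^*,y^*)/t_k^2$ by part (a). The boundedness of $\{x_k\}$ is the one point needing care, since the energy controls $u_k,v_k,y_k$ but not $x_k$ directly: I read $x_k=\tfrac{t_k-1}{t_k}x_{k-1}+\tfrac1{t_k}u_k$ as a convex combination, so $\|x_k-x^*\|\le\tfrac{t_k-1}{t_k}\|x_{k-1}-x^*\|+\tfrac1{t_k}\|u_k-x^*\|$, and a short induction (alternatively via Lemma \ref{le_S4}) bounds $\|x_k-x^*\|$ by $\max\{\|x_1-x^*\|,\sup_k\|u_k-x^*\|\}$. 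With $\{x_k\},\{u_k\},\{v_k\},\{y_k\}$ all bounded, $x_k-x_{k-1}=\tfrac1{t_k}(u_k-x_{k-1})$ and $y_k-y_{k-1}=\tfrac1{t_k}(v_k-y_{k-1})$ give the two displacement rates, the second being $\mathcal{O}(1/t_k^2)$ because $\|v_k-y_{k-1}\|\le\|v_k-y^*\|+\|y_{k-1}-y^*\|=\mathcal{O}(1/t_k)$.

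For part (c), part (b) already gives $y_k\to y^*$, so any limit point of $\{(x_k,y_k)\}$ has the form $(\hat x,y^*)$ with $\hat x$ a limit point of the bounded sequence $\{x_k\}$. Fix a compact $\mathcal{B}_1\times\mathcal{B}_2$ containing $(x^*,y^*)$ and with $(\hat x,y^*)$ in its interior; part (a) gives $\mathcal{G}_{\mathcal{B}_1\times\mathcal{B}_2}(x_k,y_k)\le D(\mathcal{B}_1,\mathcal{B}_2)/t_k^2\to0$. Since $\mathcal{G}_{\mathcal{B}_1\times\mathcal{B}_2}$ is lower semicontinuous (its first term is an lsc function of $x$ and its second an usc function of $y$), passing to the limit along the convergent subsequence yields $0\le\mathcal{G}_{\mathcal{B}_1\times\mathcal{B}_2}(\hat x,y^*)\le\liminf_j\mathcal{G}_{\mathcal{B}_1\times\mathcal{B}_2}(x_{k_j},y_{k_j})=0$, and the interior criterion recalled in Section 2 then gives $(\hat x,y^*)\in\Omega$. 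I expect the main obstacles to be the primal boundedness step in (b) and the clean observation that $\beta\mu_g>1+1/t_1$ collapses $c_k$ to zero, the latter being precisely what upgrades the single-point estimate into the full partial-gap bound used in (a) and (c).
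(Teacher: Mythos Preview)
Your proof is correct and matches the paper's argument for part (a) essentially verbatim, including the key observation that $\beta\mu_g>1+1/t_1$ forces the FISTA branch to be selected so that $c_k\equiv0$.

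In part (b) you take a cleaner route than the paper for the boundedness of $\{x_k\}$. The paper telescopes $u_{k+1}-u_1=t_{k+1}(x_{k+1}-x_k)+\sum_{i=1}^k(x_i-x_{i-1})$ and invokes Lemma~\ref{le_S4} to first bound $t_k\|x_k-x_{k-1}\|$, then deduces $\|x_k\|$ is bounded from $x_k=u_{k+1}-t_{k+1}(x_{k+1}-x_k)$. Your approach exploits the convex-combination identity $x_k=\tfrac{t_k-1}{t_k}x_{k-1}+\tfrac1{t_k}u_k$ (valid for both Options, cf.\ \eqref{eq_hxa_3}) and a one-line induction, getting boundedness first and the displacement rate $\|x_k-x_{k-1}\|=\tfrac1{t_k}\|u_k-x_{k-1}\|=\mathcal{O}(1/t_k)$ as an immediate corollary; this avoids Lemma~\ref{le_S4} entirely. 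In part (c) the paper argues differently: it passes to the limit in the \emph{pointwise} bound $\mathcal{L}(x_k,y)-\mathcal{L}(x,y_k)\le\mathcal{E}_1(x,y)/t_k^2$ for each fixed $(x,y)$ (using lower semicontinuity of $\mathcal L(\cdot,y)-\mathcal L(x,\cdot)$ and a case split on whether $x\in dom(f)$, $y\in dom(g)$) to obtain the saddle-point inequality directly, rather than going through the partial gap $\mathcal G_{\mathcal B_1\times\mathcal B_2}$ and the interior criterion as you do. Your route is a valid alternative; the paper's avoids having to justify lower semicontinuity of the supremum $\mathcal G_{\mathcal B_1\times\mathcal B_2}$ but instead requires checking that the limit point lies in $dom(f)\times dom(g)$.
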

\begin{proof}
$(a)$. Fix $(x,y)=(x^*,y^*)$. It follows from \eqref{saddle} and Lemma \ref{le_A1} that
\begin{eqnarray}\label{eq_4101}
 \mathcal{E}_{k+1}(x^*,y^*)- \mathcal{E}_k(x^*,y^*)\leq (t_{k+1}(t_{k+1}-1)-t_k^2)(\mathcal{L}(x_k, y^*)-\mathcal{L}(x^*,y_k))\leq 0,
\end{eqnarray}
where the last equality follows from $t_{k+1}\leq \frac{1+\sqrt{1+4t_k^2}}{2}$ which implies $t_{k+1}^2-t_{k+1}\leq t_k^2$. From \eqref{para_alp1}, we have $\frac{1}{2\alpha}\geq\frac{t_1^2\beta\|K\|^2}{2(t_1^2-\beta L_{g_2})}$, then by \eqref{eq_4101} and Lemma \ref{le_A1} we get 
	\begin{eqnarray*}
		 t_k^2(\mathcal{L}(x_k, y^*)-\mathcal{L}(x^*,y_k))\leq \mathcal{E}_k(x^*,y^*) \leq \mathcal{E}_1(x^*,y^*).
	\end{eqnarray*}
This implies 
\[\mathcal{L}(x_k, y^*)-\mathcal{L}(x^*,y_k) \leq \frac{\mathcal{E}_1(x^*,y^*)}{t_k^2}.\]

Given that  $t_{k+1}^2\leq  t^2_k + t_{k+1}$, we have $(t_{k+1}-\frac{1}{2})^2= t^2_k+\frac{1}{4}\leq (t_{k}+\frac{1}{2})^2$, and therefore $t_{k+1}\leq t_k+1$. With the assumption that  $\beta\mu_g> 1+\frac{1}{t_1}$ and $t_{k}\geq t_1$, we can derive
\[\beta\mu_g t_{k} > t_{k}+\frac{t_{k}}{t_1}\geq t_{k}+1\geq t_{k+1}.\]
Then if $t_{k+1}^2=  t^2_k + t_{k+1}$, we have $t_{k+1}\leq \sqrt{t_k^2+\mu_g\beta t_k}$, such that  $t_{k+1} = \min\left\{\frac{1+\sqrt{1+4t_k^2}}{2}, \sqrt{t_k^2+\mu_g\beta t_k}\right\}$ can reduce to $t_{k+1}^2=  t^2_k + t_{k+1}$. This together with Lemma \ref{le_A1} implies 
\[\mathcal{E}_{k+1}(x,y)\leq \mathcal{E}_k(x,y)\]
for any $(x,y)\in\mathbb{R}^m\times \mathbb{R}^n$ and $k\geq 1$. Since $\frac{1}{2\alpha}\geq\frac{t_1^2\beta\|K\|^2}{2(t_1^2-\beta L_{g_2})}$, we have 
\begin{equation}\label{eq_he7}
	\mathcal{L}(x_k, y)-\mathcal{L}(x,y_k) \leq \frac{\mathcal{E}_1(x,y)}{t_k^2}
\end{equation}
for any $(x,y)\in\mathbb{R}^n\times \mathbb{R}^m$ and $k\geq 1$. This, along with  the definition of $\mathcal{G}_{\mathcal{B}_1\times \mathcal{B}_2}(x_k,y_k)$ gives us $(a)$.

$(b)$. Take $(x,y) = (x^*,y^*)$. From Lemma \ref{le_A1} and \eqref{saddle}, we have
 \[\left(\frac{1}{2\alpha}-\frac{t_1^2\beta\|K\|^2}{2(t_1^2-{\beta L_{g_2}})}\right)\|u_k-x^*\|^2+\frac{t_{k+1}^2}{2\beta}\|v_k-y^*\|^2\leq \mathcal{E}_1(x^*,y^*).\]
Since $\frac{1}{2\alpha}-\frac{t_1^2\beta\|K\|^2}{2(t_1^2-{\beta L_{g_2}})}>0$ and $t_k\geq 1$, it follows that $\{(u_k,v_k)\}_{k\ge 1}$ is bounded, and 
\begin{equation}\label{eq_he9}
	\|v_k-y^*\|^2\leq \frac{2\beta\mathcal{E}_1(x^*,y^*) }{t_{k+1}^2}. 
\end{equation}
Take $(x,y) = (x^*,y^*)$ in \eqref{eq_he7}. It follows from \eqref{saddle_AA} and the fact that  $g_1$ is $\mu_g$-strongly convex that
\begin{equation}\label{eq_he10}
	\|y_k-y^*\|^2\leq \frac{2\mathcal{E}_1(x^*,y^*) }{\mu_g t_k^2}.
\end{equation}
Then $\{y_k\}_{k\geq 1}$ is bounded. From  Algorithm \ref{al1}, we can deduce
\[ u_{k+1} - u_k= x_{k}-x_{k-1} +t_{k+1}(x_{k+1}-x_k)-t_k(x_k-x_{k-1})\]
Considering that $x_1=x_0$, we obtain
\begin{eqnarray*}
	 u_{k+1}  -  u_{1}&=& \sum^{k}_{i=1} (u_{i+1} - u_i)= t_{k+1}(x_{k+1}-x_k) + \sum^{k}_{i=1} (x_{i}-x_{i-1}).
\end{eqnarray*}
Given that $\{u_k\}_{k\ge 1}$ is bounded, there exists $C>0$ such that
\[\left\|t_{k+1}(x_{k+1}-x_k) + \sum^{k}_{i=1} (x_{i}-x_{i-1})\right\|\leq C.  \]
Utilizing Lemma \ref{le_S4} with $h_k = t_{k}(x_{k}-x_{k-1})$ and $a_k=1/t_{k}$ yields
\[\sup_{k\geq 1} t_{k}\|x_{k}-x_{k-1}\| =\|h_k\|\leq 2C,\]
and, in turn
\[\|x_{k}-x_{k-1}\|\leq \frac{2C}{t_{k}}=\mathcal{O}\left(\frac{1}{t_k}\right).\]
Together with Algorithm \ref{al1} and the boundedness of $\{u_k\}_{ k\ge 1}$, this implies
\begin{eqnarray*}
	\sup_{k\geq 1}\|x_k\| &=&\sup_{k\geq 1}\|u_{k+1}-t_{k+1}(x_{k+1}-x_k)\|\\
	&\leq &\sup_{k\geq 1}\|u_{k+1}\|+\sup_{k\geq 1}\|t_{k+1}(x_{k+1}-x_k)\|\\
	&<& +\infty,
\end{eqnarray*}
demonstrating that $\{x_k\}_{k\geq 1}$ is bounded. From Algorithm \ref{al1}, we have 
\begin{eqnarray*}
	\|y_{k}-y_{k-1}\| &=& \frac{1}{t_{k}}\|v_{k}-y^*-(y_{k-1}-y^*)\|\\
	&\leq &\frac{1}{t_{k}}(\|v_{k}-y^*\|+\|y_{k-1}-y^*\|)	\\
	&=&\mathcal{O}\left(\frac{1}{t_k^2}\right),
\end{eqnarray*}
where the last equality follows from \eqref{eq_he9} and \eqref{eq_he10}. This proves  $(b)$.

$(c)$. From $(b)$, it is evident that $\{(x_k,y_k)\}_{k \geq 1}$ is  bounded. Let  $(x^{\infty},y^\infty)$ be a limit point of  $\{(x_k,y_k)\}_{k \geq 1}$. Then there exists a subsequence $\{(x_{k_j},y_{k_j})\}_{j\ge 1}$ of 
 $\{(x_k,y_k)\}_{k \geq 1}$ such that
\[\lim_{j\to+\infty}(x_{k_j},y_{k_j}) =(x^{\infty},y^\infty).  \]
Passing to the limit in \eqref{eq_he7}, we obtain 
\begin{eqnarray}\label{eq_aa11}
	\mathcal{L}(x^{\infty}, y)-\mathcal{L}(x,y^{\infty}) \leq \lim_{k\to+\infty}\frac{\mathcal{E}_1(x,y)}{t_k^2}
\end{eqnarray}
for any $(x,y)\in\mathbb{R}^m\times \mathbb{R}^n$. 

Since $x_1\in  dom(f)$, $y_1\in  dom(g)$, and $dom(f), dom(g)$ are closed and convex set, it follows from Algorithm \ref{al1} that $x^{\infty}\in dom(f)$ and $y^{\infty}\in dom(g)$.  

When $x\notin dom(f)$: we can easily get 
\[\mathcal{L}(x^{\infty}, y^{\infty})\leq \mathcal{L}(x,y^{\infty}),\]
when $x\in dom(f)$: from \eqref{eq_aa11} we have  
\[\mathcal{L}(x^{\infty}, y^{\infty})-\mathcal{L}(x,y^{\infty}) \leq \lim_{k\to+\infty}\frac{\mathcal{E}_1(x,y^{\infty})}{t_k^2} = 0.
\]
Similarly, when $y\notin dom(g)$:
\[ \mathcal{L}(x^{\infty},y) \leq \mathcal{L}(x^{\infty}, y^{\infty}),\]
when $y\in dom(g)$: from \eqref{eq_aa11} we have  
\[\mathcal{L}(x^{\infty},y) \leq \mathcal{L}(x^{\infty}, y^{\infty}).\]
In summary, for any $(x,y)\in\mathbb{R}^m\times \mathbb{R}^n$, we have
\[  \mathcal{L}(x^{\infty},y) \leq \mathcal{L}(x^{\infty}, y^{\infty})\leq  \mathcal{L}(x,y^{\infty}).  \]
This demonstrates   that $(x^{\infty},y^\infty)$ satisfies \eqref{saddle}, and therefore is a saddle point.

\end{proof}

\begin{remark}
From Lemma \ref{le_S2} and Theorem \ref{th_A1},  we get  that Algorithm \ref{al1} enjoys the non-ergodic $\mathcal{O}(1/k^2)$ convergence rate of gap function,  the $\mathcal{O}(1/k)$ convergence rate of $\|x_{k}-x_{k-1}\|$  and the $\mathcal{O}(1/k^2)$ convergence rate of $\|y_{k}-y_{k-1}\|$. Note that the strong convex assumption of $g$ allows the improved convergence rate of $\|y_{k}-y_{k-1}\|$.
\end{remark}

\begin{remark}
When $\beta\mu_g>1+1/t_1$, the parameter setting of $t_{k+1} = \min\left\{\frac{1+\sqrt{1+4t_k^2}}{2}, \sqrt{t_k^2+\mu_g\beta t_k}\right\}$ becomes $t_{k+1} = \frac{1+\sqrt{1+4t_k^2}}{2}$. In this case,  the non-ergodic $\mathcal{O}(1/k^2)$ convergence rate derived in Theorem \ref{th_A1} is also consistent with  those of  accelerated methods based on Tseng's scheme  in \cite{Xu2017,HeADMM,Tseng2008} and  accelerated methods based on Beck-Teboulle's scheme \cite{HeNA, BeckIma, Chambolle}. In contrast,  the accelerated first-order primal-dual algorithms in \cite{ChambolleP,chang2021,Fercoq,Jiang2021,Tan2020} for problem \eqref{ques} in the partially strongly convex case only achieve ergodic $\mathcal{O}(1/k^2)$ convergence rates; The accelerated first-order primal-dual algorithms in \cite{Tran2022,Zhu2022} need to multiple complex  parameter settings to achieve  non-ergodic  $\mathcal{O}(1/k^2)$ convergence rates,  a fixed proximal center $\dot{y}$ is required  in \cite{Tran2022}, and they do not consider the problem \eqref{ques} with $f$ and $g$ having a composite structure.
\end{remark}

\begin{remark}
As shown in the proof of Lemma \ref{le_A1}, we can see that when  $f_2$  vanishes in problem \eqref{ques}, i.e.,  $f=f_1$, then parameter settings \eqref{para_alp1} hold for any $	L_{f_2}>0$ and it can be simplified to  
\[\alpha>0,\quad \beta<\frac{t_1^2}{L_{g_2}},\quad	 \alpha\beta\|K\|^2< {1-\frac{\beta L_{g_2}}{t_1^2}}. \]
Similarly,  if  $g_2$  vanishes in problem \eqref{ques}, the parameter requirement in \eqref{para_alp1} for  Algorithm \ref{al1}  reduces to  
\[\alpha< \frac{1}{L_{f_2}}, \quad \beta>0, \quad \alpha \beta \|K\|^2< (1-\alpha L_{f_2}).\]
When both $f_2$ and  $g_2$  vanish in problem \eqref{ques}, the parameter condition \eqref{para_alp1} for  Algorithm \ref{al1} becomes $\alpha\beta\|K\|^2<1$ and $\alpha,\beta>0$.
\end{remark}

\section{Numerical experiments}
In this section, we present two numerical experiments aimed at validating the performance of Algorithm \ref{al1}. We apply Algorithm \ref{al1} to solve the $l_1$ regularized least squares problem and the nonnegative least squares problem. The numerical results demonstrate the effectiveness and superior performance of Algorithm \ref{al1} compared to existing accelerated algorithms. To further enhance the practical performance of the algorithms, one can employ a restarting strategy, and theoretical guarantees can be established in a similar manner as shown in \cite{TranMPC}. It's worth noting that the parameter settings of all algorithms in the experiments satisfy the parameter assumptions of the theoretical convergence rates.
All codes are implemented using Python 3.8 on a MacBook laptop equipped with  Intel Core i5 CPU running at 2.30GHz and 8 GB of memory.

\subsection{$l_1$ regularized least squares.}
We study the following $l_1$ regularized problem:
\begin{equation}\label{ex1}
	 \min_{x\in\mathbb{R}^{n}} F(x) = \lambda\|x\|_1+\frac{1}{2}\|Kx-b\|^2,
\end{equation}
where $\lambda>0$, $K\in\mathbb{R}^{m\times n}$ and $b\in\mathbb{R}^{m}$. Clearly, \eqref{ex1} can be reformulated as the following saddle point problem 
\begin{eqnarray*}
 \min_{x\in\mathbb{R}^{n}} \max_{y\in\mathbb{R}^{m}} 	\lambda\|x\|_1 +\langle Kx,y\rangle-\frac{1}{2}\|y+b\|^2.
\end{eqnarray*}
Let $f(x)=\lambda\|x\|_1$, $g(y)=\frac{1}{2}\|y+b\|^2$. It is easy to verify that $g$ is strongly convex with constant $1$ and that Assumption \ref{assA} is satisfied. Next, We compare the performance of PDA (\eqref{pda} with $\theta=1$),  accelerated primal-dual algorithm (APDA) \cite[Algorithm 2]{ChambolleP}, FISTA \eqref{FISTA}, the sparse reconstruction by separable approximation (SpaRSA) \cite{Wright2009}, and inertial accelerated  primal-dual algorithm (Algorithm \ref{al1} with {\bf Option 1} and {\bf Option 2} (for short AL1-op1, AL-op2). We set the parameters as follows:

$\bullet$ PDA: $\alpha = \frac{1}{20\|K\|}, \beta =  \frac{20}{\|K\|}$;

$\bullet$ APDA: $\alpha = \frac{1}{\|K\|}, \beta =  \frac{1}{\|K\|}$;

$\bullet$ FISTA: $\alpha = \frac{1}{\|K\|^2}$;

$\bullet$ SpaRSA: $M=5, \sigma=0.01, \alpha_{\max} = 1/\alpha_{\min} = 10^{30}$;

$\bullet$ AL1-op1, AL1-op2: $t_0=5$, $\alpha = \frac{0.98}{2\|K\|}$, $\beta = \frac{2}{\|K\|}$.

Let $\lambda=0.1$, and let $K$ be generated by a Gaussian distribution. The vector $\bar{x}$ has $0.95n$ non-zero elements, which are generated by a uniform distribution in the range $[-10, 10]$. The noise vector $\omega \in \mathbb{R}^m$ has entries drawn from  $\mathcal{N}(0,0.1)$. The observed vector $b$ is given by $b=K\bar{x}+\omega$. Figure \ref{fig1} illustrates the convergence results, plotting $F(x_k) - \min F(x^*)$ against the number of iterations and CPU time. In this experiment, we execute all algorithms for a sufficient number of iterations to obtain the ground truth solution $\min F$. As depicted in Figure \ref{fig1}, both AL1-op1 and AL1-op2 exhibit superior numerical performance compared to the other algorithms considered, as evidenced by fewer iterations  and less CPU time.

\begin{figure}[htbp]
\centering
\subfigure[$m=1000,\ n=2000$]{
\begin{minipage}[t]{0.95\linewidth}
\centering
\includegraphics[width=2.6in]{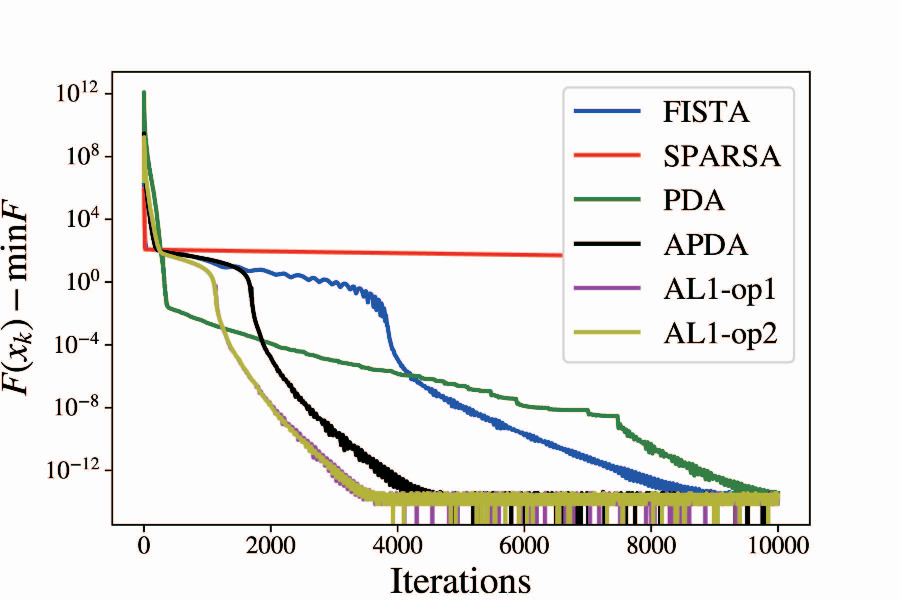}
\centering
\includegraphics[width=2.6in]{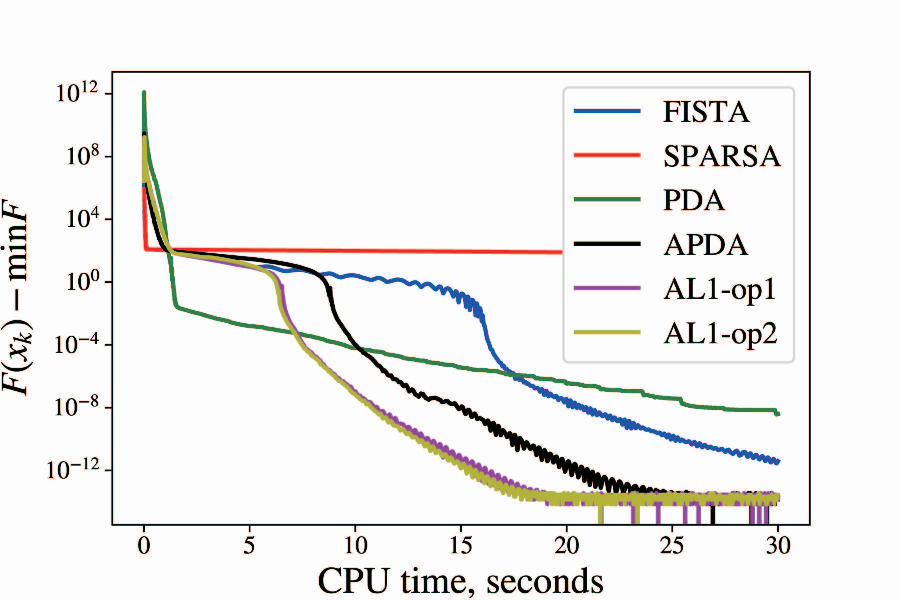}
\end{minipage}%
}%
\\
\subfigure[$m=2000,\ n=4000$]{
\begin{minipage}[t]{0.95\linewidth}
\centering
\includegraphics[width=2.6in]{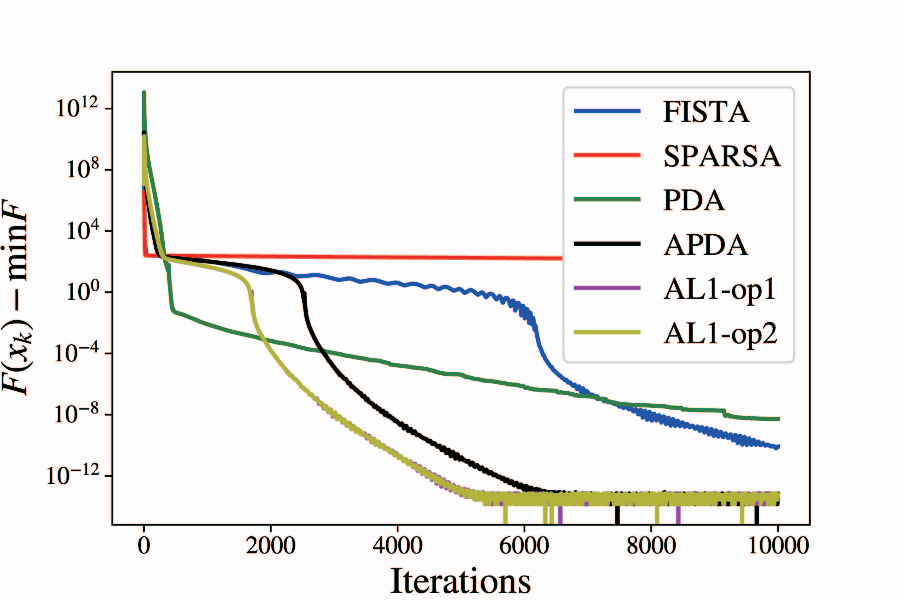}
\centering
\includegraphics[width=2.6in]{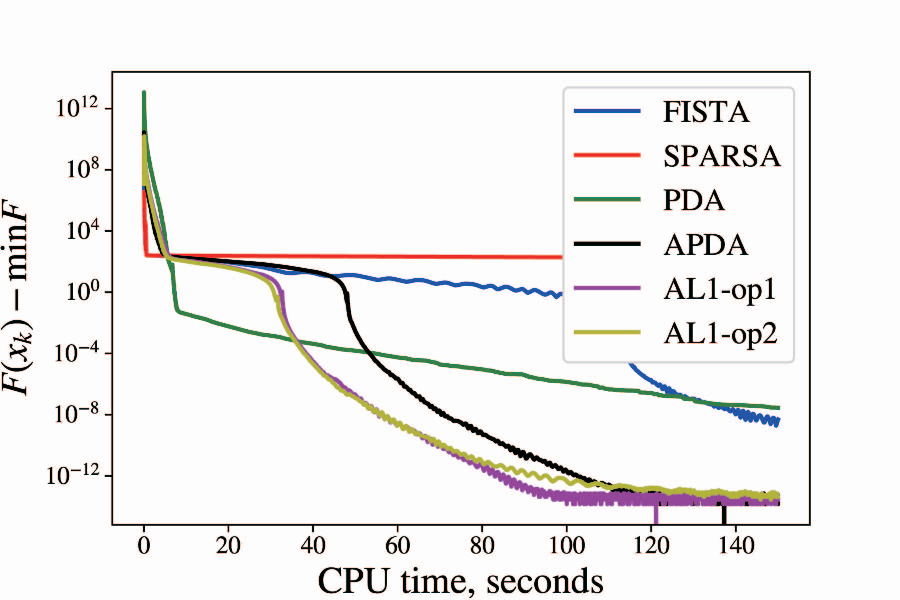}
\end{minipage}%
}%
\caption{Numerical results of algorithms for problem \eqref{ex1}}\label{fig1}
\end{figure}

\subsection{Nonnegative least squares.}

\begin{figure}[htbp]
\centering
\subfigure[$s=0.1$]{
\begin{minipage}[t]{0.95\linewidth}
\centering
\includegraphics[width=2.6in]{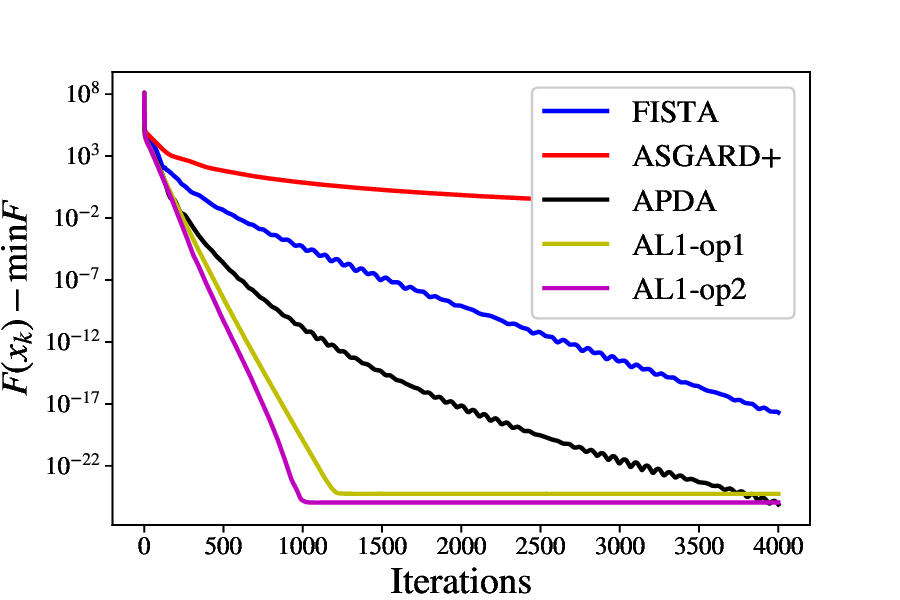}
\centering
\includegraphics[width=2.6in]{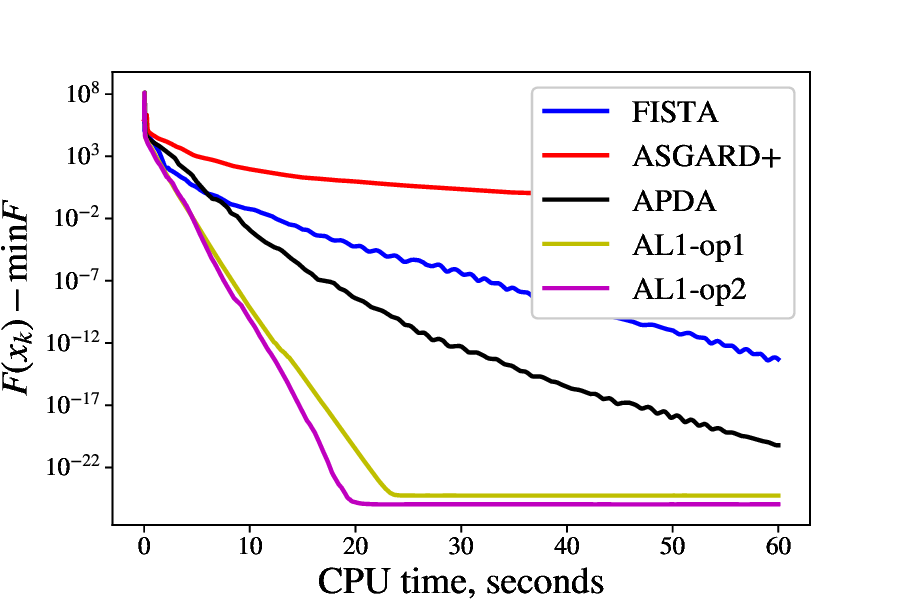}
\end{minipage}%
}%
\\
\subfigure[$s=0.5$]{
\begin{minipage}[t]{0.95\linewidth}
\centering
\includegraphics[width=2.6in]{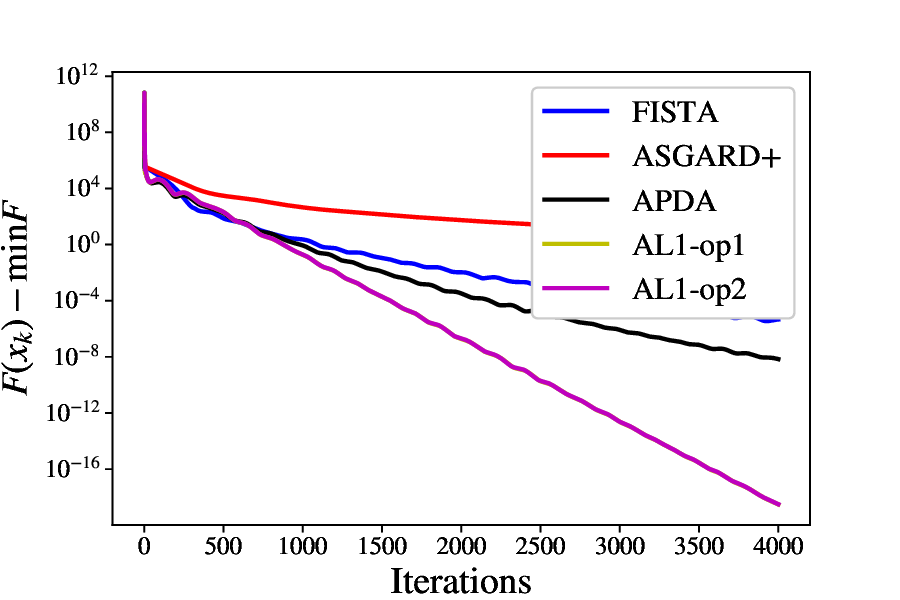}
\centering
\includegraphics[width=2.6in]{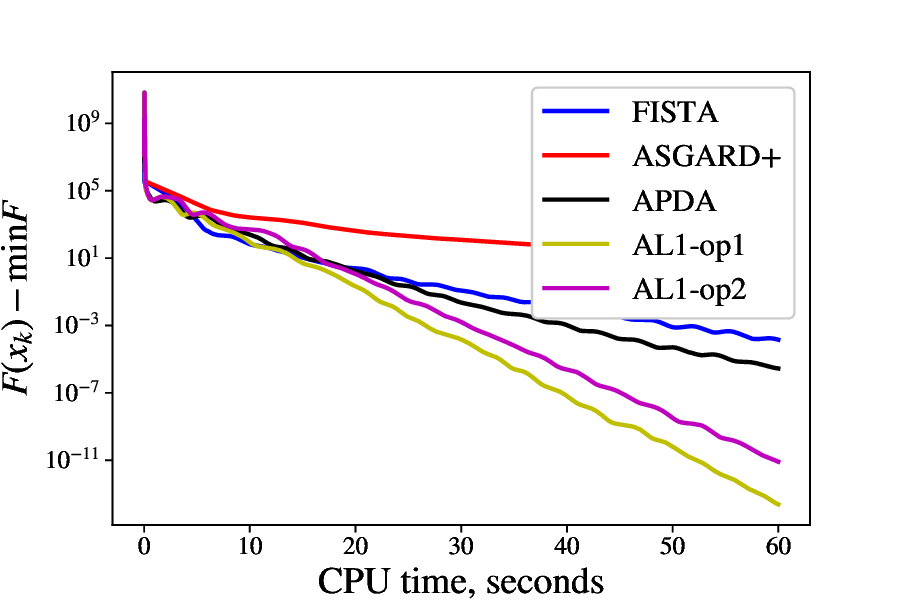}
\end{minipage}%
}%
\\
\subfigure[$s=1$]{
\begin{minipage}[t]{0.95\linewidth}
\centering
\includegraphics[width=2.6in]{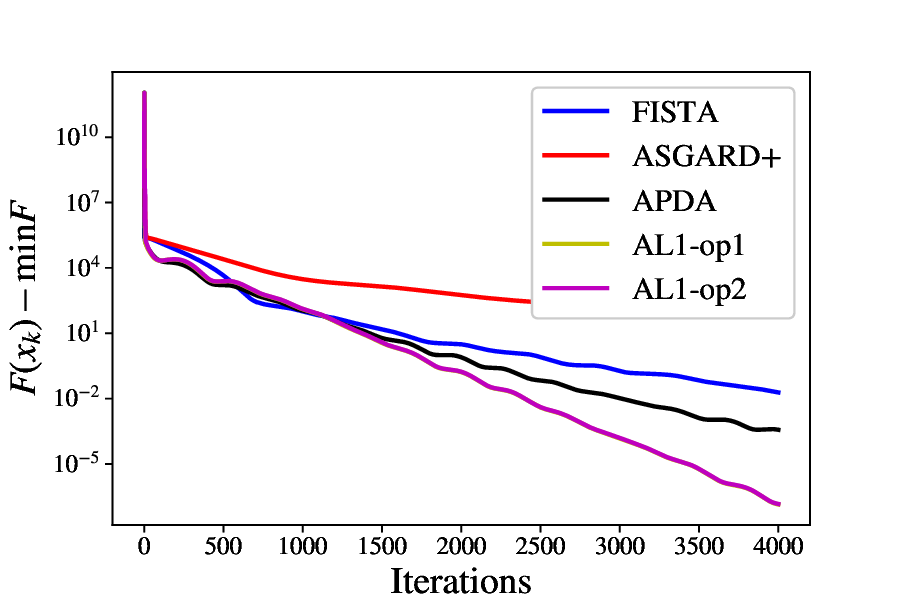}
\centering
\includegraphics[width=2.6in]{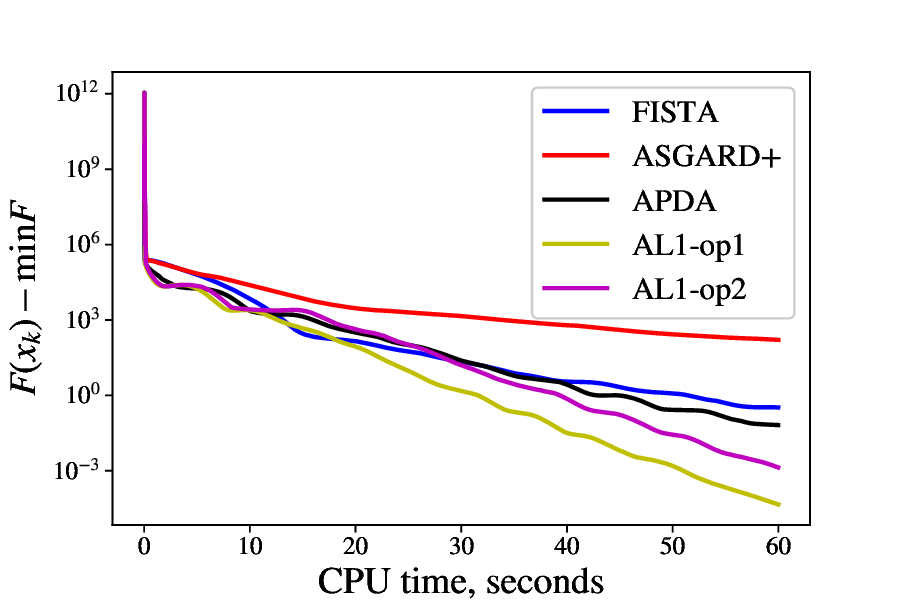}
\end{minipage}%
}%
\caption{Numerical results of algorithms for problem \eqref{ex2}}\label{fig2}
\end{figure}

In this part, we consider another regularized least squares problem:
\begin{equation}\label{ex2}
	 \min_{x\geq 0} F(x) = \frac{1}{2}\|Kx-b\|^2,
\end{equation}
where  $K\in\mathbb{R}^{m\times n}$ and $b\in\mathbb{R}^{m}$. This problem admits the following saddle point formulation:
\begin{eqnarray*}
	 \min_{x\in\mathbb{R}^{n}} \max_{y\in\mathbb{R}^{m}} \delta_{R_+^n}(x) +\langle Kx,y\rangle-\frac{1}{2}\|y+b\|^2,
\end{eqnarray*}
where $\delta_{\mathbb{R}_+^n}(x)$ is the indicate function of the set $\mathbb{R}_+^n:=\{x\in \mathbb{R}^{n}|x\ge 0\}$. 

We compare the performance of  accelerated primal-dual algorithm (APDA) \cite[Algorithm 2]{ChambolleP}, FISTA \eqref{FISTA}, accelerated smoothed gap reduction (ASGARD+) \cite{Tran2020}, and inertial accelerated  primal-dual algorithm (Algorithm \ref{al1} with Option 1 and Option 2 (AL1-op1, AL-op2). The parameters are set as follows:

$\bullet$ FISTA: $\alpha = \frac{1}{\|K\|^2}$;

$\bullet$ APDA: $\alpha = \frac{1}{\|K\|}, \beta =  \frac{1}{\|K\|}$;

$\bullet$ ASGARD+: $\beta_0 = \frac{0.382}{\|K\|^2}$, $\tau_{k+1}=\frac{\tau_k}{2}(\sqrt{\tau_k^2+4}-\tau_k)$ with $\tau_0 = 1$;

$\bullet$ AL1-op1, AL1-op2: $t_0=1.2$, $\alpha = \frac{0.98}{\|K\|}$, $\beta = \frac{1}{\|K\|}$.

We generate a random matrix $K\in\mathbb{R}^{m\times n}$ with density $s \in (0,1]$. The nonzero entries of $K$ are independently generated from a uniform distribution in $[0, 0.1]$.

 We generate $\bar{x}\in\mathbb{R^n_{+}}$ as a sparse vector with $0.05n$ nonzero entries, drawn from a uniform distribution in $[0, 100]$, and set $b=K\bar{x}$. Here, we consider $m=4000$ and $n=2000$. Figure  \ref{fig2} illustrates the convergence results with $F(x_k)-\min F$ versus iterations and CPU seconds. The efficiency of Algorithm \ref{al1} is evident in Figure \ref{fig2}, particularly in the case of sparse real matrices $A$.

\section{Conclusions}
In this paper, we propose an  inertial accelerated primal-dual algorithm  incorporating Nesterov's  extrapolation to  solve the saddle point problem \eqref{ques}. By constructing   energy sequences, we demonstrate that  the proposed algorithm  achieves a non-ergodic  $\mathcal{O}(1/k^2)$ rate under  the partially strong convexity assumption. We compare Algorithm \ref{al1} with some existing methods by testing  the $l_1$ regularized least squares problem and the nonnegative least squares problem. Our numerical experiments validate the effectiveness and superior performance of our approaches compared to existing methods.

\appendix

\section*{CRediT authorship contribution statement}
{\bf Xin He:} Conceptualization, Software, Visualization, Writing – original draft.
{\bf Nan-Jing Huang:}  Conceptualization, Investigation,  Funding acquisition, Writing – review \& editing. 
{\bf Ya-Ping Fang} Methodology, Supervision, Writing – review \& editing.

\section*{Declaration of competing interest}

The authors declare that they have no known competing financial interests or personal relationships that could have appeared
to influence the work reported in this paper.

\section*{Data availability}
No data was used for the research described in the article.

\section*{Acknowledgments}
This work was supported by the Talent Introduction Project of Xihua University (Grant No. Z241102), and the National Natural Science Foundation of China (Grant No. 12171339).


\end{document}